\newtheorem{lemma}{Lemma}[section]
\newtheorem{theorem}[lemma]{Theorem}
\newtheorem{thm}[lemma]{Theorem}
\theoremstyle{definition}
\newtheorem{definition}[lemma]{Definition}
\newtheorem*{remark}{Remark}
\numberwithin{equation}{section}
\newcommand{\comment}[1]{}
\newcommand{\Z}{{\mathbb Z}}
\newcommand{\R}{{\mathbb R}}
\newcommand{\N}{{\mathbb N}}
\newcommand{\ls}{{\log^\sharp}}
\newcommand{\al}{{\alpha}}
\newcommand{\de}{{\delta}}
\newcommand{\eps}{{\varepsilon}}
\newcommand{\ph}{{\varphi}}
\newcommand{\rh}{{\varrho}}
\newcommand{\ov}[1]{\overline{ #1}}
\newcommand{\Hm}[1]{\leavevmode{\marginpar{\tiny%
$\hbox to 0mm{\hspace*{-0.5mm}$\leftarrow$\hss}%
\vcenter{\vrule depth 0.1mm height 0.1mm width \the\marginparwidth}%
\hbox to 0mm{\hss$\rightarrow$\hspace*{-0.5mm}}$\\\relax\raggedright
#1}}}
\begin{document}

\title[Uniqueness class, stochastic completeness and volume growth]{On the uniqueness class, stochastic completeness and volume growth for graphs}

\author[X. Huang]{Xueping Huang}
\address{Xueping Huang, Department of Mathematics, Nanjing University of Information Science and Technology, 210044 China} \email{hxp@nuist.edu.cn} 

\author[M. Keller]{Matthias Keller}
\address{Matthias Keller, Universit\"at Potsdam, Institut f\"ur Mathematik, 14476  Potsdam, Germany}
\email{matthias.keller@uni-potsdam.de}

\author[M. Schmidt]{Marcel Schmidt}
\address{Marcel Schmidt, Mathematisches Institut \\Friedrich Schiller Universit{\"a}t Jena \\07743 Jena, Germany } \email{schmidt.marcel@uni-jena.de}

\begin{abstract}
In this note we prove an optimal volume growth condition for stochastic completeness of graphs under very mild assumptions. This is realized by proving a uniqueness class criterion for the heat equation which is an analogue to a corresponding result of Grigor'yan  on manifolds. This uniqueness class criterion is shown to hold for graphs that we call globally local, i.e., graphs where we control the jump size  far outside.  The transfer from general graphs to  globally local graphs is then carried out via so called refinements.
\end{abstract}


\maketitle



\section*{Introduction}

In 1980 Azencott \cite{Az74} gave an example of a complete Riemannian manifold on which the Brownian motion has finite lifetime. Such manifolds are referred to as stochastically incomplete and typically are of very large volume growth. On the other hand it was shown that stochastic completeness is guaranteed under certain volume bounds which were improved over the years, see Gaffney \cite{Ga59}, Karp/Li \cite{KarpLi}, 
 Davies \cite{Da92} and Takeda  \cite{Tak89}. An optimal result was obtained by  Grigor'yan \cite{Gri86}
who proved stochastic completeness of the manifold under an integral criterion involving the volume (see also \cite{Gri99}) and he showed by examples that his criterion is sharp. Later, Grigor'yan's result was extended by Sturm \cite{Stu94} to strongly local Dirichlet forms where the phenomenon is referred to as conservativeness and distance balls were considered with respect to a so called intrinsic metric. Indeed, the proof in this more general situation follows in spirit of Grigor'yan. A remarkable feature of  Grigor'yan's proof is that it not only yields stochastic completeness but the crucial inequality (to which we refer as Grigor'yan's inequality) directly implies a uniqueness class statement for the heat equation. Precisely, while stochastic completeness is equivalent to uniqueness of bounded solutions to the heat equation, the uniqueness class statement extends this uniqueness to a class of unbounded solutions which satisfy a certain growth bound.

In recent years the phenomenon of stochastic completeness was intensively studied in the non-local realm of graphs. The enormous interest in this topic was sparked by the PhD thesis \cite{Woj08} and follow up work \cite{Woj11} of Wojciechowski who presented examples of graphs of polynomial volume growth, which are stochastically incomplete. This showed that there is no analogous result to Grigor'yan's in the non-local realm of graphs when one considers volume growth of balls with respect to the combinatorial graph distance.
 However, in view of the work of Sturm \cite{Stu94} for local Dirichlet forms, which uses intrinsic metrics, it seemed promising to consider distance balls with respect to a metric that is adapted to the heat flow on the graph. While such a theory of intrinsic metrics was developed at this time also for non-local (and thus for all regular) Dirichlet forms, this idea was used by Grigor'yan/Huang/Masamune \cite{GHM12} to prove a first result in this direction that guaranteed stochastic completeness of the graph provided an exponential (but not optimal) bound on the volume growth.  Shortly afterwards Grigor'yan's result for manifolds was recovered for graphs using so called intrinsic (or adapted) metrics by Folz \cite{Fol14} and shortly after that an alternative proof was given by Huang \cite{Huang14}.  See also \cite{HS14} for results on the closely related problem of escape rates.

In spirit, the proofs of these results used techniques that relate the non-local graph to a more local object. Specifically, Folz \cite{Fol14} compared the heat flow on the combinatorial graph with a corresponding metric (or quantum) graph and Huang and Shiozawa \cite{HS14} decreased non-locality of the graph by inserting additional vertices in the edges (which probabilistically decreased the jump size of the process). Although this was an enormous breakthrough, there are two aspects in which the results are not completely satisfying -- one of technical the other of structural nature. The technical aspect is that the results were proven under rather restrictive conditions such as local finiteness of the graphs, finite jump size of the metric and uniform lower bounds on the measure. Moreover, the only metrics considered were special path metrics. These restrictions did not inspire much hope that the proof strategies can be carried over to more general jump processes. The second aspect, which may be seen as a shortcoming  of more fundamental nature, is that the proofs do not allow to recover Grigor'yan's inequality. So, as a consequence, one can not deduce corresponding statements about the uniqueness class for the heat equation. Indeed, this is not a shortcoming of the proofs but the inequality simply does not hold for general graphs. In his PhD thesis \cite{Hua11} Huang gave an example of a nontrivial solution to the heat equation on the integer line which showed that the corresponding uniqueness  class statement of Grigor'yan, which directly follows from Grigor'yan's inequality, is already wrong in this simple case of a graph.

The purpose of this paper is to address these two aspects. Firstly, we prove Grigor'yan's criterion for stochastic completeness for graphs under the only assumption that there exists an intrinsic pseudo metric whose distance balls are finite. Secondly,  we address the second aspect mentioned above in a structural way and recover Grigor'yan's inequality for graphs which we call globally local or  GL graphs for short.\footnote{The term GL graphs may be also read as an abbreviation for Grigor'yan-Lenz graphs in honor of our PhD advisors who never ceased to push us towards properly understanding the issue at hand.} These are graphs for which an intrinsic metric exists such that the jump size becomes  small far outside with respect to the distance to a reference point. In this sense they appear to be more and more local on a global scale, i.e., outside of large compact sets. Our proof of Grigor'yan's inequality for globally local graphs follows the strategy of Grigor'yan's original proof. However, the non-locality of the space requires certain crucial modifications. Let us already stress  at this point that our proof does not make use of the discreteness of the space but can be applied to more general jump processes. The proof of the result for stochastic completeness then uses an idea of Huang and Shiozawa \cite{HS14}. We refine a given graph by inserting additional vertices on the edges, so that the resulting graph is globally local, and then employ a stability result for stochastic completeness. In the proof of this stability result we use the discreteness of the space and it is the reason why this paper does not treat general jump processes. 

In summary, we give a new approach to proving an optimal volume growth condition for   stochastic completeness of graphs. It is based on two ingredients: A uniqueness class for the heat equation and a stability result for stochastic completeness. While our proof for the uniqueness class also works for general jump processes, as of yet the stability statement is restricted to the discrete setting. In other words, a suitable stability statement for stochastic completeness for general jump processes combined with our uniqueness class for (globally local) jump processes would lead to an optimal volume growth test for stochastic completeness for all jump processes, a problem that is still open.

\section{Set-up and main result}

Let $ X $ be a discrete countable set and let  $ m:X\to(0,\infty) $.  We  extend  $m$ to a measure on all subsets of $X$ via additivity.  For $1 \leq p \leq \infty$ let $ \ell^{p}(X,m) $ be the canonical real Banach space with norm $ \|\cdot\|_{p} $.
Moreover, we let $C(X)$ be the space of real-valued functions on $X$ and $C_c(X)$ be the space of real-valued functions of finite support.

A graph over $ (X,m) $ is a symmetric map $ b:X\times X\to[0,\infty) $ with zero diagonal and
\begin{align*}
\sum_{y\in X} b(x,y)<\infty,\quad x\in X.
\end{align*}
For $ x,y\in X $ we write $ x\sim y $ whenever $ b(x,y)>0 $. In this case we call $ (x,y) $  an \emph{edge}. The graph is called {\em locally finite} if for each $x \in X$ the number of edges containing $x$ is finite, i.e., if 
$$\sharp \{y \in X \mid b(x,y) > 0\} < \infty.$$

The gradient squared of a function $ f:X\to [0,\infty] $
is defined as
\begin{align*}
|\nabla f|^{2}(x)=\sum_{y\in X}b(x,y)(f(x)-f(y))^{2},\quad x\in X.
\end{align*}
It might take the value $ \infty $ but is finite for  $ f\in C_{c}(X). $

Due to the summability condition on $b$, the quadratic form
\begin{align*}
Q(f)=\frac{1}{2}\sum_{x\in X}|\nabla f  |^{2}(x)
\end{align*}
on $ D(Q) =\ov{C_{c}(X)}^{\|\cdot\|_{Q}}$ with $ \|\cdot\|_{Q} =\sqrt{Q(\cdot)+\|\cdot\|_2^2}$ is a regular Dirichlet form on $\ell^2(X,m)$. The associated positive self-adjoint operator $ L  $ is a restriction of the formal Laplacian
\begin{align*}
\mathcal{L}f(x)=\frac{1}{m(x)}\sum_{y\in X}b(x,y)(f(x)-f(y)),
\end{align*}
which is defined on its domain
$$
\mathcal{F}(X)=\{f:X\to\R\mid \sum_{y\in X}b(x,y)|f(y)|<\infty,x\in X\},
$$
see e.g. \cite{KL10}. If the graph is locally finite, then $\mathcal{F}(X) = C(X)$.

For a given $0 < T \leq \infty$ we say that a function $u:(0,T) \times X \to \R$ is a {\em solution to the heat equation with initial value $u_0 \in C(X)$} if the following conditions are satisfied.
\begin{itemize}
 \item  For every $t > 0$ we have $u_t \in \mathcal{F}(X)$,
 \item for every $x \in X$ the function $t \mapsto u_t(x)$ is continuously differentiable and $u_t(x) \to u_0(x)$, for $t \to 0+$, 
 \item $u$ solves the heat equation, i.e., 
 $$\partial_t u = -\mathcal{L}u \text{ on } (0,T) \times X.$$
\end{itemize}
Here and also below we write $u_t$ for the function $X \to \R, \, x \mapsto u(t,x)$.

Since $ Q $ is a Dirichlet form, the semigroup $ e^{-tL} $, $ t > 0 $, extends to $ \ell^{p}(X,m) $, $ p\in[1,\infty]$.  For $f\in\ell^{p}(X,m)$  the function $u:(0,\infty) \times X \to \R$ defined by $u_t = e^{-tL}f $  is a solution to the heat equation with initial value $f$, see e.g. \cite{KL12}.

The graph $ b $ over $ (X,m) $ is called \emph{stochastically complete} if
\begin{align*}
e^{-tL} 1=1
\end{align*}
for some (all) $ t>0 $, where $ 1 $ is the function constantly equal to $1$. Note that this definition depends on both $b$ and $m$. It corresponds to the property that the associated Markov process has infinite lifetime. Stochastic completeness is related to uniqueness of bounded solutions to the heat equation. More precisely, a graph $b$ over $(X,m)$ is stochastically complete if and only if any bounded (in space and time) solution to the heat equation with initial value $0$ vanishes, see \cite{KL12}.

We call a (pseudo) metric $ d $ on $ X $ an \emph{intrinsic (pseudo) metric} (for the graph $b$ over $(X,m)$) if for all $ x\in X $ 
\begin{align*}
\sum_{y \in X} b(x,y) d(x,y)^2  = |\nabla d(x,\cdot)|^{2}\leq m(x).
\end{align*}

We denote the distance balls for a pseudo metric about a  reference vertex $o\in X $  with radius $ r \ge 0 $ by
\begin{align*}
B_{r}=\{y\in X\mid d(o,y)\leq r\}.
\end{align*}
The reference vertex $o$ is fixed throughout the paper.

With these notions we can formulate our main theorem. It is the precise analogue to Grigor'yan's optimal volume growth criterion on Riemannian manifolds, cf. \cite{Gri86}. We let $\ls = \max\{\log,1\}$.

\begin{thm}\label{thm:main}
	Let $ b $ be a graph over $ (X,m) $  and let $ d $ be an intrinsic pseudo metric with finite distance balls. If
	\begin{align*}
	\int_{1}^{\infty}\frac{ r}{\ls({m(B_{r})})}dr=\infty,
	\end{align*}
	then  the graph is stochastically complete.
\end{thm}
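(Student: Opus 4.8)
The plan is to combine the two ingredients announced in the introduction: a uniqueness class statement for the heat equation on \emph{globally local} graphs, derived from an analogue of Grigor'yan's inequality, and a reduction of the general case to the globally local one via refinements together with a stability result for stochastic completeness. By the characterization cited above, stochastic completeness is equivalent to the vanishing of every solution $u$ on $(0,\infty)\times X$ that is bounded in space and time with $u_0=0$; for such a solution one has $\int_{B_r} u_t^2\,dm \le \|u\|_\infty^2\, m(B_r)$, so that the growth of the local $L^2$ norm is controlled by $m(B_r)$, and the hypothesis $\int_1^\infty r/\ls(m(B_r))\,dr=\infty$ is exactly the growth restriction governing Grigor'yan's uniqueness argument.

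The analytic core, which I would establish first for globally local graphs, is an integrated maximum principle. Following Grigor'yan, I would fix a reference time and introduce a time-dependent Gaussian weight $\xi_t(x)$ modelled on $-d(o,x)^2/(C(t_0-t))$ and examine the monotonicity of the weighted energy $t\mapsto \int_X u_t^2 e^{\xi_t}\,dm$ along the heat flow. In the local setting the inequality $\partial_t\xi+\tfrac12|\nabla\xi|^2\le 0$ renders this energy nonincreasing; here the chain rule fails and differentiation produces extra non-local defect terms, built from expressions of the form $b(x,y)\,u^2\,(e^{\xi(x)/2}-e^{\xi(y)/2})^2$. The purpose of the globally local hypothesis is precisely that, since the jump size tends to zero far from $o$, these defect terms can be absorbed and an approximate monotonicity survives, yielding a local $L^2$ estimate comparing $\int_{B_r} u_\tau^2\,dm$ at a late time with the space-time energy on a larger ball at earlier times, damped by a Gaussian factor.

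Iterating this estimate over a carefully chosen sequence of radii and time scales, and feeding in the volume growth through the divergence of $\int_1^\infty r/\ls(m(B_r))\,dr$, forces $\int_{B_r} u_\tau^2\,dm=0$ for all $r,\tau$, so $u\equiv 0$; this proves the theorem for globally local graphs. For a general graph $b$ over $(X,m)$ with intrinsic pseudo metric $d$ and finite balls, I would then pass to a \emph{refinement} $\tilde b$, inserting additional vertices on each edge and redistributing weights and measure so that $\tilde b$ admits an intrinsic metric with jump size decaying away from $o$, i.e. so that $\tilde b$ is globally local, while the volume growth bound is preserved up to a harmless comparison and the integral criterion still holds. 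The globally local case yields stochastic completeness of $\tilde b$, and a stability result — subdivision of edges does not affect stochastic completeness — transfers the conclusion back to $b$.

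I expect the main obstacle to be the control of the non-local defect terms in the weighted energy computation: this is the exact point where Grigor'yan's fundamentally local argument breaks down, and it is what forces the globally local hypothesis and the associated quantitative smallness of the jump size. A secondary difficulty is the stability step, since showing that refinement preserves both stochastic completeness and the volume condition genuinely uses the discreteness of $X$, which is also why the approach does not extend to general jump processes.
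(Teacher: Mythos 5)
Your overall strategy coincides with the paper's: a Grigor'yan-type uniqueness class for globally local graphs, proved by a weighted (Gaussian) energy estimate in which the non-local defect terms are absorbed using the decay of the jump size, followed by passage to a globally local refinement and a stability result transferring stochastic completeness from the refinement back to the original graph. The surrounding reductions also match: stochastic completeness is tested on bounded solutions with initial value $0$, the growth condition \eqref{equation:growth condition 0} then follows from $\sum_{x\in B_r}u_t^2(x)m(x)\leq \|u\|_\infty^2\, m(B_r)$, and the volume of the refinement is comparable to that of the original graph (Lemma~\ref{lemma:properties of refinements} gives $m'(B_r')\leq 2m(B_r)$), so the integral criterion survives the refinement.

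There is, however, one genuine gap in your reduction. The hypotheses of the theorem do \emph{not} imply local finiteness: an intrinsic pseudo metric with finite distance balls is perfectly compatible with a vertex $o$ having infinitely many neighbours $x_n$, necessarily along edges of unbounded length, say $b(o,x_n)=c_n$ and $d(o,x_n)=d_n\to\infty$ with $\sum_n c_n d_n^2\leq m(o)$. Your step ``pass to a refinement, inserting additional vertices on each edge'' breaks down exactly here. To make the refined graph globally local with respect to $f$, the edge $(o,x_n)$ must be cut into pieces of length at most (roughly) $Bd_n/f(Ad_n)$, i.e.\ into $n(o,x_n)+1\gtrsim f(Ad_n)/B$ pieces, and each piece carries the weight $b(o,x_n)(n(o,x_n)+1)$. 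Hence the refined weight at $o$ sums to roughly $\sum_n c_n f(Ad_n)$, and nothing in the hypotheses prevents this from diverging while $\sum_n c_n d_n^2<\infty$ holds; the resulting object then violates the standing summability condition $\sum_{y'}b'(o,y')<\infty$ and is not a graph in the sense of the paper. This is precisely why the paper defines refinements, and proves both Theorem~\ref{thm:refinements} and Lemma~\ref{l:refinement}, only for locally finite graphs.

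The missing idea is a preliminary truncation. One first replaces $b$ by $b_s=b\,1_{\{d\leq s\}}$ and proves a separate stability statement (Lemma~\ref{lemma:truncation}, essentially from \cite{GHM12}, obtained via the weak Omori--Yau principle): if $b_s$ over $(X,m)$ is stochastically complete, then so is $b$. The truncated graph has finite jump size, and finite jump size together with finite distance balls forces local finiteness (\cite[Lemma~3.5]{Ke15}); only then can one refine and apply Theorem~\ref{theorem:uniqueness class} as you describe. Without this step your argument proves the theorem only for locally finite graphs. A minor additional remark: the stability you invoke should be stated in the one direction actually needed and proved in the paper, namely that stochastic completeness of the \emph{refinement} implies that of the original graph; the blanket claim that subdivision ``does not affect'' stochastic completeness is more than is required (and more than Theorem~\ref{thm:refinements} asserts).
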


Note that the volume growth condition of the theorem is independent of the choice of the reference point.  The lower bound  $1$ in the interval of integration could also be replaced by any positive number.  Hence, this criterion only depends on the growth of $\log m(B_r)$ for large $r$.

To prove the result we first show a uniqueness class theorem for the heat equation on graphs where the jump size becomes sufficiently small far outside. We refer to these graphs as \emph{globally local} or \emph{GL graphs}
with respect to a function $ f $. As mentioned above stochastic completeness is equivalent to uniqueness of bounded solutions to the heat equation. For globally local graphs where $ f $ is given by a certain function of the volume growth we can therefore conclude stochastic completeness from our uniqueness class statement.

For a graph with an intrinsic metric that admits finite balls we then proceed as follows. It is known that adding edges that correspond to large jumps to a stochastically complete graph leaves the graph stochastically complete, see \cite{GHM12}. Hence, we can restrict ourselves to the case that the intrinsic metric has finite jump size. Together with the assumption of finite distance balls, finite jump size  gives that these graphs are now locally finite. Then, we {\em refine} the graph by adding vertices ``on edges'' such that the jump size of this new graph is  small enough far outside. For this modified graph the above mentioned uniqueness class theorem is applicable. As a final step we  show that stochastic completeness of  this refinement yields stochastic completeness of the original graph.


Let us be more specific.  	Let a graph $ b $ over $ (X,m) $ and a pseudo metric $ d
$ be given. The \emph{jump size} $ s $ of $ d $ is defined by
\begin{align*}
s=\sup\{d(x,y)\mid x,y \in X \text{ with } x\sim y\}.
\end{align*}
For the fixed reference vertex $ o\in X $ (which we mostly keep implicit in notation) and for $ r\ge0 $ we define  $ s_{r} $,  the \emph{jump size outside of} $ B_{r} $,  by
\begin{align*}
s_{r}:=\sup\{d(x,y)\mid x,y\in X\text{ with } x\sim y\mbox{ and }d(x,o)\wedge d(y,o)\ge r\}.
\end{align*}
Note that the jump size satisfies $s = s_0$. With these notions we can define what we mean by a graph being globally local.

\begin{definition}
	A graph with a pseudo metric  is called \emph{globally local} with respect to a  monotone increasing function $f:(0,\infty) \to (0,\infty)$ if $ s_{0} = s < \infty$ and there is a constant $A > 1$ such that
	\begin{align}\label{inequality:GL}
	 \limsup_{r \to \infty} \frac{s_r f(Ar)}{r}  <  \infty. \tag{GL}
	\end{align}
\end{definition}

The following remark puts this definition into perspective. 

\begin{remark}
By definition globally local graphs have finite jump size. Moreover,  a graph with finite jump size is globally local with respect to an increasing function $f$ iff there exist  $A > 1$ and $B> 0$ such that  
$$s_{r}\leq  \frac{Br}{f(Ar)}$$ 
holds for all  $r$ large enough. Therefore, globally local graphs are graphs of finite jump size for which $s_r$ shows a certain decay with respect to $f$, as $r \to \infty$. 

Putting the constant $A > 1$ into the definition of globally local graphs is a bit arbitrary. We chose this convention because then Theorem~\ref{theorem:uniqueness class} takes the same form as  the corresponding result on manifolds, see the discussion below.

Furthermore, note that $ r\mapsto r/f(Ar) $ does not need to be monotone decreasing. Since $ s_{r}\leq s <\infty $ for all  $ r\ge0 $, we immediately see that \eqref{inequality:GL} is only a restriction whenever $ f(r)>r $ for $ r $ large. Indeed, we often think of $ \lim_{r\to\infty} r/f(r)=0 $ as it is satisfied for example by $f(r)= r\log(r) $, $ r>0 $.
\end{remark}

The following result for globally local graphs is the main ingredient for the proof of the volume growth test for stochastic completeness. However, it is certainly of independent interest for the following reasons:
First of all it is the precise analogue of the corresponding theorem in the continuum due to Grigor'yan \cite{Grigoryan88}, which is the strongest known uniqueness class result on manifolds. Indeed, in the continuum the jump size is always zero, hence, there is no additional assumption on $ s_{r} $. Secondly, without assuming the graph to be globally local the result is  wrong even on the simplest graph, $ \Z $ with standard weights, as it was shown in an example in \cite{Hua11} (see also the remark after the theorem and Section \ref{section:examples}). Finally, our proof does not use the discreteness of the space $ X $ and hence extends to non-local operators on other state spaces.

\begin{theorem}[Uniqueness class]\label{theorem:uniqueness class}
Let $b$ be a graph over $(X,m)$ and let $d$ be an intrinsic pseudo metric  with finite distance balls. Assume that the graph is globally local with respect to a monotone increasing function $f:(0,\infty) \to (0,\infty)$ with 
	$$\int^\infty_{1} \frac{r}{f(r)} dr = \infty.$$
	Let $0< T \leq \infty$ and let $u:(0,T) \times X \to \R$ be a solution
	 to the heat equation with initial value $0$. If 
	\begin{align}\label{equation:growth condition 0}
	\int_0^T \sum_{x \in B_r} |u_t(x)|^2 m(x) dt \leq e^{f(r)} \tag{GC}
	\end{align}
	 for every $r > 0$, then $u_t = 0$ for all $t > 0$.
\end{theorem}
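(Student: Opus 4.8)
The plan is to transport Grigor'yan's weighted $L^{2}$-energy method to the present non-local setting, the globally local hypothesis being precisely what is needed to tame the non-locality. Write $\rho(x) = d(o,x)$ and recall that finite distance balls make every ball $B_{R}$ finite, so that all sums below are finite and the discrete Green formula
\[
\sum_{x} \mathcal{L}u(x)\, g(x)\, m(x) = \frac{1}{2}\sum_{x,y} b(x,y)\bigl(u(x)-u(y)\bigr)\bigl(g(x)-g(y)\bigr)
\]
holds without boundary terms at infinity. For a cutoff $\psi \in C_{c}(X)$ with $\psi \equiv 1$ on $B_{R}$ and $\supp \psi \subseteq B_{2R}$, and a moving Gaussian weight $\xi_{t}(x) = -\rho(x)^{2}/(2(D-t))$ on $[0,D)$, I would study the quantity
\[
I(t) = \sum_{x} u_{t}(x)^{2}\, \psi(x)^{2}\, e^{\xi_{t}(x)}\, m(x).
\]
Differentiating in $t$ (legitimate since the sum is finite and $t \mapsto u_{t}(x)$ is $C^{1}$), inserting $\partial_{t}u = -\mathcal{L}u$, and applying Green's formula with $g = u_{t}\psi^{2} e^{\xi_{t}}$ produces a non-positive dissipative term of the form $-\tfrac{1}{2}\sum_{x,y} b(x,y)\,\psi(x)^{2} e^{\xi_{t}(x)}(u_{t}(x)-u_{t}(y))^{2}$ together with cross terms in which the weight $e^{\xi_{t}}$ and the cutoff $\psi$ are differenced across edges.

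The heart of the matter, and the step I expect to be the main obstacle, is the control of these cross terms. In the continuum they are handled pointwise by $|\nabla\rho| \le 1$ and the eikonal-type inequality $\partial_{t}\xi + \tfrac{1}{2}|\nabla\xi|^{2} \le 0$. Here the edge increments $e^{\xi_{t}(x)} - e^{\xi_{t}(y)}$ are differences of exponentials, so I would bound $|\xi_{t}(x)-\xi_{t}(y)|$ over an edge $x \sim y$ by the mean-value estimate $|e^{a}-e^{b}| \le |a-b|\, e^{a \vee b}$. Since $|\xi_{t}(x)-\xi_{t}(y)| \lesssim \rho(x)\, d(x,y)/(D-t)$ and $d(x,y) \le s_{\rho(x)\wedge\rho(y)}$ on an edge far out, the globally local bound $s_{r} \le Br/f(Ar)$ keeps this factor uniformly bounded as $\rho \to \infty$ and $t \to D$. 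This is exactly the modification forced by non-locality: the decay of the jump size compensates the singularity of the Gaussian weight, so that the discrete correction to $\tfrac{1}{2}|\nabla\xi|^{2}$ can be absorbed into the dissipative term — the intrinsic condition $\sum_{y} b(x,y)d(x,y)^{2} \le m(x)$ supplying the remaining budget — leaving $I'(t)$ bounded by a cutoff error supported on the annulus $B_{2R}\setminus B_{R}$ where $\nabla\psi \neq 0$. Matching the GL decay rate against the rate at which the weight degenerates, uniformly in $t$ and $R$ (and in particular calibrating the constant $A>1$ against the constant in $\xi_{t}$), is where the real work lies.

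Integrating the resulting differential inequality over a short time interval yields a Gaussian off-diagonal estimate: the weighted energy on an inner ball at the terminal time is dominated by $\exp(-(R-\rho)^{2}/(C\tau))$ times the space-time $L^{2}$-mass on the outer ball $B_{2R}$, plus an annulus error that \eqref{equation:growth condition 0} controls by $e^{f(2R)}$. I would then run Grigor'yan's iteration: pick an increasing sequence of radii $\rho_{k} \to \infty$ and a matching partition of $[0,T]$ so that at each step the Gaussian factor $\exp(-(\rho_{k+1}-\rho_{k})^{2}/(C\tau_{k}))$ defeats the admissible growth $e^{f(\rho_{k+1})}$. The hypothesis $\int^{\infty} r/f(r)\,dr = \infty$ is precisely the condition ensuring that such a sequence can be continued indefinitely with $\rho_{k}\to\infty$ while the accumulated Gaussian factors push the energy on any fixed ball below every positive threshold. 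Letting $k \to \infty$ forces $u_{t} = 0$ on each $B_{\rho_{0}}$; as $\rho_{0}$ and $t \in (0,T)$ are arbitrary, we conclude $u \equiv 0$. Throughout, discreteness is never used beyond the finiteness of balls, so the same scheme is expected to apply to general jump processes, as claimed in the introduction.
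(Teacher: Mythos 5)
Your plan is structurally the same as the paper's proof: a weighted $L^2$ estimate built from a spatial cutoff times a moving Gaussian-type weight (the paper's Lemma~\ref{lemma:general estimate} and Lemma~\ref{lemma:main estimate}), a discrete eikonal-type inequality in which the globally local bound $s_r \le Br/f(Ar)$ tames the exponential edge increments, the resulting Grigor'yan inequality, and the iteration driven by $\int^\infty r/f(r)\,dr = \infty$. So the architecture is right and you have correctly located the crux; however, the mechanism you propose at that crux has a genuine gap.

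The problem is the claim that the discrete correction to $\tfrac12|\nabla\xi|^2$ can be absorbed ``into the dissipative term, the intrinsic condition supplying the remaining budget.'' With your weight $\xi_t(x) = -\rho(x)^2/(2(D-t))$, $\rho(x)=d(o,x)$, the only negative term available pointwise is $\partial_t\xi_t(x)\,m(x) = -\rho(x)^2 m(x)/(2(D-t)^2)$, which is proportional to $\rho(x)^2$. On the other hand, bounding $|\xi_t(x)-\xi_t(y)| \le (2\rho(x)+d(x,y))\,d(x,y)/(2(D-t))$ and summing against the intrinsic bound $\sum_y b(x,y)d(x,y)^2 \le m(x)$ leaves, besides a term proportional to $\rho(x)^2$, an \emph{additive} term of order $s_{\mathrm{loc}}^2\, m(x)/(D-t)^2$ coming from the $d(x,y)^2$ part of the increment. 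This term has no counterpart in the manifold case and is not proportional to $\rho(x)^2$: at $x=o$, and wherever $\rho(x)$ is small compared to the local jump size, the available budget vanishes while the correction does not, so the claimed pointwise absorption fails; nor can a term in $u_t(x)^2$ be absorbed by the dissipative term, which involves only differences $(u_t(x)-u_t(y))^2$. A second, related calibration issue: the estimate $(1-e^a)^2 \le a^2 e^{2a\vee 0}$ costs an exponential factor, so the constant $1/2$ in your weight cannot survive; one needs a free large constant. The paper resolves exactly these two points in Lemma~\ref{lemma:main estimate}: it takes $\rho(x) = (d(x,o)-r)_+$ (distance beyond the inner ball, so the weight is spatially constant there), replaces $1/2$ by a constant $C$ subject to $C \ge 8\exp\bigl(s_{r-2s}(4(R-r)+2s)/(C\delta)\bigr)$, and, crucially, shifts the exponent to $\xi = -(\rho^2+\varepsilon)/(C(t+\delta-\tau))$ with $\varepsilon \ge 2 s_{r-2s}^2/C$, so that the time derivative produces the extra negative term $-\varepsilon\, m(x)/(C(t+\delta-\tau)^2)$ that swallows the additive jump-size correction. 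With these modifications (and with the finite-step iteration that reaches the initial time, where $u_0=0$, for fixed $r$ and then lets $r\to\infty$, rather than letting the number of steps tend to infinity), your scheme becomes the paper's proof.
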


\begin{remark}
 Every graph with finite jump size is globally local with respect to the function $f:(0,\infty) \to (0,\infty),\, f(r) = r$. Hence, provided the intrinsic metric has finite distance balls and finite jump size, the above theorem can always be applied to solutions to the heat equation that show at most an exponential growth in the growth condition \eqref{equation:growth condition 0}.
 
 The best known   uniqueness class result that holds for all graphs that admit an intrinsic metric with finite distance balls and finite jump size is the following. For $\alpha > 0$ we let $f_\alpha:(0,\infty) \to (0,\infty),\, f_\alpha(r) = \alpha r \log r$.   It is shown in \cite{Hua11} that a solution to the heat equation with initial value $0$ vanishes if for some $\alpha < 1/2$ it satisfies the growth condition \eqref{equation:growth condition 0} with respect to the function $f_\alpha$.  Moreover, it is shown that this result is optimal in the following sense. For every $\varepsilon > 0$  there is a graph (indeed one can take the integer line $\Z$)  and a nontrivial solution to the heat equation with inital value $0$ such that for some $\alpha < 2 \sqrt{2} + \varepsilon$ it satisfies \eqref{equation:growth condition 0} with respect to the function $f_\alpha$.
 
 Our integral growth test for the function that controls the grwoth of the solution  cannot distinguish between the functions $f_\alpha$ for different $\alpha > 0$.  Thus, even if we could further weaken the assumption \eqref{inequality:GL}, the previous discussion shows that we cannot hope to recover the best uniqueness class result that holds for all graphs from our result on globally local graphs. However, for graphs that show the mild decay $s_r \leq K / \log r$ for some constant $K > 0$ and all $r$ large enough our theorem can be applied to solutions to the heat equation that satisfy the growth condition \eqref{equation:growth condition 0} with respect to $f_\alpha$ for some $\alpha > 0$. Hence, for such graphs our uniqueness class is stronger than the one obtained in \cite{Hua11}. A concrete example for this situation is discussed in Section~\ref{section:examples}. There we also show that our assumption \eqref{inequality:GL} is in some sense optimal for proving a uniquenss class with respect to a function $f$ that satisfies the above integral test. 
\end{remark}

Note that the assumptions of finite distance balls and finite jump size (which follows from the graph being globally local) already imply that the graph is locally finite, see e.g. \cite[Lemma~3.5]{Ke15}. Nevertheless, the  result about stochastic completeness, Theorem~\ref{thm:main}, which is derived from the previous theorem, does not assume local finiteness. The reason for this is a trick used in \cite{GHM12} that allows to remove edges corresponding to large jumps to obtain finite jump size and, hence, local finiteness. For the precise statement, see Lemma~\ref{lemma:truncation} below.

To apply the uniqueness class result to prove stochastic completeness for locally finite graphs, one needs the concept of refinements.  These are introduced next. Since the definition of a refinement is a bit technical  we explain the idea before hand. Given a graph $ b $ over $ (X,m) $ and a function $ n $ on the edges, we replace every edge $ \{x,y\} $ by a path $ \{x_{0},\ldots,x_{n(x,y)+1}\} $ of edges with $ x_{0}=x $ and $ x_{n(x,y)+1}=y $. Furthermore, the new metric can be thought as taking the distance in $ X $ and then adding the remaining part into the paths of edges that were added to the graph. Moreover, the edge weights and measure of the refinement are chosen in such a way that the volume growth of the refinement and the volume growth of the original graph can be compared.

 The definition below makes it precise on how to define the edge weights and the measure  given $ n $ and a metric $ d $, which will later assumed  to be intrinsic.
 %
 
\begin{definition}[Refinement]
Let $ b $ be a locally finite graph  over $ (X,m) $ and let $ d $ be a pseudo metric on $X$. Furthermore, let $ n:X\times X\to\N_{0} $ be symmetric and such that 
	$ n(x,y)\geq 1$ if and only if $ x\sim y $.
We  define the \emph{refinement  with respect to $ n $} to be the graph	$ b' $ over $ (X',m') $ with   distance function $ d' $ given as follows: The vertex set $ X' $ is the union
\begin{align*}
	X'=X\dot{\cup} {\bigcup_{x,y\in X}} X_{x,y},
\end{align*}
where $ X_{x,y} =X_{y,x}$ are pairwise disjoint finite sets (i.e., $ X_{x,y}\cap X_{w,z}=\emptyset $ if $ \{x,y\}\neq \{w,z\} $), such that $ \sharp X_{x,y}=n(x,y) $. Note that $ X_{x,y}=\emptyset $ if $x\not\sim y  $. The measure $ m' $ is defined as
\begin{align*}
m'\vert _{X}=m, \qquad m'(z) =\frac{2b(x,y)d(x,y)^{2}}{n(x,y) + 1},\;z \in X_{x,y} \text{ and }x\sim y.
\end{align*}
The edge weight $ b' $ is defined
as follows. For $ x\sim y $ in $ X $ we fix an enumeration of $ X_{x,y}=\{x_{1},\ldots, x_{n(x,y) }\} $  and define
\begin{align*}
	b'(x_{i-1},x_{i})&={b(x,y)}{(n(x,y) + 1) },\quad i=1,\ldots,n(x,y)+1,
\end{align*}
with $ x=x_{0} $, $ y=x_{n(x,y)+1} $. Otherwise, we set 
$$  b'=0  $$
and we denote $ x\sim' y $ if $ b'(x,y)>0 $. 
  
  We define $ d' $ in three steps. For $ z,z'\in\{x,y\}\cup X_{x,y} $,  we define
 \begin{align*}
 d'(z,z')=\min \{k\frac{d(x,y)}{n(x,y) + 1}\mid\mbox{$z= x_{0}\sim'\ldots\sim' x_{k}=z'$ in $\{x,y\}\cup X_{x,y}$}  \}.
 \end{align*}
For $ z\in X $, $ z'\in \{x,y\}\cup X_{x,y} $ with $ z\neq x,y $, we define
 \begin{align*}
 d'(z,z')=\min_{w\in\{x,y\}} \{d(w,z)+d'(w,z')  \}
 \end{align*}
 and for $ z\in X_{x,y} $  $ z'\in X_{x',y'} $ for $ \{x,y\}\neq \{x',y'\} $,  we define
 \begin{align*}
 d'(z,z')=\min_{v\in\{x,y\},v'\in\{x',y'\}}\{&{d'(z,v)}+d(v,v')+d'(v',z')\}.\end{align*}
\end{definition}

It is readily verified that $ d' $ is a pseudo metric and $ d=d' $ on $ X\times X$. 

Note that we define refinements only for locally finite graphs. This is because on locally infinite graphs the  edge weight of the refinement $b'$ might violate the summability condition $\sum_{y \in Y'} b'(x,y) < \infty$ for all $x \in X'$, that we always impose on edge weights. 

For  proving stochastic completeness of graphs through refinements the following stability result on stochastic completeness is essential. It is proven in Section~\ref{section:refinements}.

\begin{thm}[Stability of stochastic completeness under refinements]\label{thm:refinements}
	If a refinement of a locally finite graph with intrinsic metric is stochastically complete, then the graph is stochastically complete.
\end{thm}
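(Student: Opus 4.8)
The plan is to pass to the probabilistic picture and compare lifetimes, since stochastic completeness means exactly that the associated minimal Markov process $(Y_t)$ on $X$ has infinite lifetime $\zeta=\infty$ almost surely (equivalently, every bounded solution of the heat equation with initial value $0$ vanishes). Let $(Y'_t)$ denote the process on the refinement, with lifetime $\zeta'$; by hypothesis $\zeta'=\infty$ a.s. I would first identify the trace of $(Y'_t)$ on $X$ with the original process $(Y_t)$, and then show that infinite lifetime of $(Y'_t)$ forces infinite lifetime of its trace.

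First step (identification of the trace). Watch $(Y'_t)$ only while it sits in $X$: with $A_s=\int_0^s \mathbf 1_X(Y'_r)\,dr$ and $\tau_t=\inf\{s:A_s>t\}$, the time-changed process $\hat Y_t=Y'_{\tau_t}$ is a Markov chain on $X$, the trace process. Its Dirichlet form is the trace of the refinement form on $\ell^2(X,m'|_X)=\ell^2(X,m)$, obtained by energy-minimizing (harmonic) extension across the inserted paths. On a single subdivided edge $\{x,y\}$ the harmonic extension is affine, and the series law for the $n(x,y)+1$ conductances $b(x,y)(n(x,y)+1)$ gives effective conductance $b(x,y)$; equivalently, the enhanced rate of entering a path (a factor $n(x,y)+1$) is exactly cancelled by the gambler's-ruin probability $1/(n(x,y)+1)$ of crossing it rather than returning. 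Since also $m'|_X=m$, the trace form equals $Q$ and hence $\hat Y=Y$ in law. In particular $\zeta=\hat\zeta=A_{\zeta'}$, the total time $(Y'_t)$ spends in $X$.

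Second step (lifetime comparison). Given $\zeta'=\infty$, it remains to prove $A_\infty=\infty$, i.e.\ that the refinement process cannot live forever while spending only finite time in $X$. The mechanism I would exploit is the intrinsic-metric bound, which controls the measure added on the paths: for every $x$,
$$\sum_{y\sim x} b(x,y)\,d(x,y)^2\le m(x),$$
and since the interior of the path over $\{x,y\}$ carries total $m'$-mass $2b(x,y)d(x,y)^2\bigl(1-\tfrac{1}{n(x,y)+1}\bigr)$, the interior mass incident to $x$ (counted with weight $1/2$) is at most $m(x)$. Because the process is symmetric with respect to $m'$, its occupation time of a region is governed by the $m'$-mass of that region; the above bound then lets one dominate the occupation time of the inserted interior vertices by a constant times the occupation time of $X$. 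Consequently the time spent on the paths is comparable to $A_\infty$, so $\zeta'=A_\infty+(\text{path time})<\infty$ whenever $A_\infty<\infty$. Contrapositively, $\zeta'=\infty$ forces $A_\infty=\infty$, i.e.\ $\zeta=\infty$, which is stochastic completeness of the original graph.

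The main obstacle is precisely this second step. The naive attempt to transfer completeness by extending a solution (or a $\lambda$-superharmonic function) edge by edge fails, because making the extension harmonic on the interior of a path mismatches the flux balance at the original vertices, while matching the flux spoils the sign of $(\mathcal{L}'+\lambda)$ on the interior — the two requirements pull in opposite directions. The point is that completeness is a global, not edge-local, property, and the content of the theorem is that the intrinsic condition forbids the refinement from stalling on the added paths. Turning the heuristic ``occupation time is controlled by $m'$-mass'' into a rigorous domination of interior occupation by $X$-occupation is the heart of the argument, and it is here that discreteness of $X$ is used: each inserted path is finite, and finite jump size together with finite balls forces local finiteness, so the trace is a genuine combinatorial object. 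For genuinely non-local jump processes no such reduction to finite paths is available, which is exactly why the authors restrict the stability statement to the discrete setting.
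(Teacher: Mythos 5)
Your proposal takes a genuinely different route from the paper, but it has a real gap at exactly the point you yourself flag as ``the heart of the argument.'' The paper's proof is analytic, not probabilistic: it runs the weak Omori--Yau principle, taking a bounded-above $u\in\mathcal F(X)$ with $\mathcal L u<-1$ near its supremum and extending it to the refinement by the \emph{quadratic} interpolation $\varphi_{x,y}(t)=\tfrac12 t^2+\bigl(\tfrac{u(y)-u(x)}{d(x,y)}-\tfrac{d(x,y)}{2}\bigr)t+u(x)$ along each subdivided edge. Convexity keeps $u'$ bounded above, a direct computation gives $\mathcal L'u'=-1/2$ exactly at the inserted vertices, and the intrinsic bound $\sum_y b(x,y)d(x,y)^2\le m(x)$ pays for the flux error at the original vertices, giving $\mathcal L'u'<-1/2$ there. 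Note that this contradicts your closing claim that any edge-local extension must fail because harmonicity and flux-matching ``pull in opposite directions'': the paper's extension is neither harmonic nor flux-matched, and it works; the intrinsic condition is precisely what absorbs the mismatch.

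Concerning your own route: the skeleton is viable, but two steps are asserted rather than proved. First, ``the trace form equals $Q$ and hence $\hat Y=Y$ in law'' glosses over whether the trace Dirichlet form is the \emph{regular} (minimal) form $Q$ rather than a larger extension; if the trace were a non-minimal extension of the jump rates $b(x,y)/m(x)$, its infinite lifetime would say nothing about stochastic completeness of $b$. (This is fixable by checking directly that the trace is a pure-jump chain that dies exactly at its explosion time, but it must be checked.) Second, and more seriously, the justification you offer for the occupation-time domination is not valid reasoning: ``occupation time of a region is governed by the $m'$-mass of that region'' is false in general, since occupation times are Green's functions integrated against the measure, and a small-mass region can carry large occupation time. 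What is true --- and what the definition $m'(z)=2b(x,y)d(x,y)^2/(n(x,y)+1)$ is engineered for --- is an excursion estimate: entering the path over $\{x,y\}$ from $x$, gambler's ruin gives expected exit time $n(x,y)$ steps with mean holding time $d(x,y)^2/(n(x,y)+1)^2$ each, so the expected interior time per entry is at most $d(x,y)^2/(n(x,y)+1)$; weighting by the jump probabilities proportional to $b(x,y)(n(x,y)+1)$ and using the intrinsic bound, the expected interior time per excursion from $x$ is at most the expected holding time at $x$. Even granting this, you still need to upgrade an expectation bound to the almost-sure implication ``$A_\infty<\infty$ implies finite interior time,'' which requires conditioning on the trace's embedded chain and using that, given the chain, explosion is equivalent to summability of the expected holding times, so that sums of independent nonnegative variables with summable conditional means are a.s.\ finite. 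None of this appears in the proposal, so as written it is a gap --- though, unlike many gaps, a fillable one along the lines you indicate.
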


\section{Grigor'yan's inequality and uniqueness class}

In this section we prove Grigor'yan's inequality, Lemma~\ref{lemma:grigoryans estimate}, for globally local graphs. It allows us to compare the size of a solution to the heat equation on a small ball at a given time with its size on a larger ball at an earlier time up to a small error. Here, the precise relation of the radii of the two balls with the possible time difference and the error is of utmost importance.  From this comparison we deduce Theorem~\ref{theorem:uniqueness class} on the uniqueness class for the heat equation by a standard iteration procedure. 

As in the manifold case Grigor'yan's inequality follows from carefully choosing the cut-off function $\varphi$ in the basic estimate of Lemma~\ref{lemma:general estimate}. However, since we deal with non-local operators, the estimates are more delicate. This manifests in the lengthy statement of Lemma~\ref{lemma:main estimate}, which can be seen as the main new technical ingredient of this paper. From it we deduce Grigor'yan's inequality for globally local graphs.

The following discrete Caccioppoli-type inequality is certainly well known to experts and is contained in the literature under various assumptions on the involved functions, see e.g. \cite[Lemma~1.6.1]{Hua11} and \cite[Lemma~3.4]{HKMW13}.

\begin{lemma}[Caccioppoli inequality] \label{lemma:Caccioppoli}
 Let $u \in \mathcal{F}(X)$ and let $\varphi \in C_c(X)$. Then
 $$-\sum_{x \in X} \mathcal{L}u(x) u(x) \varphi^2(x) m(x) \leq \frac{1}{2} \sum_{x \in X}u^2(x) |\nabla \varphi|^2(x). $$
\end{lemma}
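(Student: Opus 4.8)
The plan is to prove the Caccioppoli inequality by a direct computation using the symmetric structure of the Laplacian together with Green's formula and an elementary pointwise estimate. First I would expand the left-hand side using the definition of $\mathcal{L}$, writing
\begin{align*}
-\sum_{x \in X} \mathcal{L}u(x) u(x) \varphi^2(x) m(x) = -\sum_{x,y \in X} b(x,y)(u(x)-u(y)) u(x) \varphi^2(x).
\end{align*}
The key symmetrization step is to interchange the roles of $x$ and $y$ in the double sum (legitimate once I have checked absolute summability, which is where $u \in \mathcal{F}(X)$ and $\varphi \in C_c(X)$ enter). Averaging the original expression with its symmetrized version collapses the sum into the manifestly symmetric form
\begin{align*}
-\sum_{x \in X} \mathcal{L}u(x) u(x) \varphi^2(x) m(x) = -\frac{1}{2}\sum_{x,y \in X} b(x,y)(u(x)-u(y))(u(x)\varphi^2(x) - u(y)\varphi^2(y)).
\end{align*}

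Next I would work on the inner factor $(u(x)\varphi^2(x) - u(y)\varphi^2(y))$. The standard trick is to rewrite this so that the gradient of $u$ is isolated. Using the algebraic identity
\begin{align*}
u(x)\varphi^2(x) - u(y)\varphi^2(y) = \varphi^2(x)(u(x)-u(y)) + u(y)(\varphi^2(x)-\varphi^2(y)),
\end{align*}
and substituting into the double sum, the first term produces $-\tfrac12 \sum_{x,y} b(x,y)\varphi^2(x)(u(x)-u(y))^2 \le 0$, which I simply discard, while the second term must be bounded above by $\tfrac12 \sum_x u^2(x)|\nabla\varphi|^2(x)$.

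The remaining work is to estimate the cross term $-\tfrac12\sum_{x,y} b(x,y)(u(x)-u(y)) u(y)(\varphi^2(x)-\varphi^2(y))$. Here I would factor $\varphi^2(x)-\varphi^2(y) = (\varphi(x)-\varphi(y))(\varphi(x)+\varphi(y))$ and apply the elementary Cauchy--Schwarz / Young inequality $|ab| \le \tfrac12 a^2 + \tfrac12 b^2$ pointwise on each edge, choosing the split so that the factor $(u(x)-u(y))^2\varphi^2$ reappears (and can be absorbed into the nonpositive term already discarded) while the other half yields $u^2(y)(\varphi(x)-\varphi(y))^2$, whose sum over $x$ is exactly $\sum_y u^2(y)|\nabla\varphi|^2(y)$. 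I expect the main obstacle to be purely bookkeeping: keeping careful track of the constants so that the absorbed term does not exceed the discarded nonpositive contribution, and verifying at the outset that all double sums converge absolutely so that the symmetrization and rearrangements are justified. Once the pointwise estimate is arranged with the right coefficients, summation gives the claimed bound directly.
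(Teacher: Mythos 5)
Your proof is correct, and it follows the same strategy that the paper's own proof invokes: in fact the paper does not write out an argument at all, but merely cites Lemma~3.4 of \cite{HKMW13} and describes it as ``rearranging the sums and an elementary estimate'', which is precisely your symmetrization-plus-Young scheme; your proposal supplies the details the paper outsources. Since you flagged the constant bookkeeping as the main risk, here is why it closes and where it is tight. After symmetrizing (valid, as you say, because $\varphi$ has finite support and $u\in\mathcal{F}(X)$, so all sums effectively run over $\supp\varphi$ and converge absolutely) and applying your splitting, the cross term is
\begin{align*}
-\tfrac{1}{2}\sum_{x,y\in X} b(x,y)\,(u(x)-u(y))\,u(y)\,(\varphi(x)-\varphi(y))(\varphi(x)+\varphi(y)),
\end{align*}
and Young must be applied in the weighted form $|AB|\le \tfrac14 A^2+B^2$ with $A=(u(x)-u(y))(\varphi(x)+\varphi(y))$ and $B=u(y)(\varphi(x)-\varphi(y))$; this is your $|ab|\le\tfrac12 a^2+\tfrac12 b^2$ after the rescaling $a=A/\sqrt{2}$, $b=\sqrt{2}\,B$, which is what your phrase ``choosing the split'' must mean. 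Then $(\varphi(x)+\varphi(y))^2\le 2\bigl(\varphi^2(x)+\varphi^2(y)\bigr)$ together with the symmetry of $b$ and of $(u(x)-u(y))^2$ turns the $A^2$-contribution into exactly $\tfrac12\sum_{x,y}b(x,y)\varphi^2(x)(u(x)-u(y))^2$, which is cancelled exactly by the retained nonpositive term, while the $B^2$-contribution sums to exactly $\tfrac12\sum_{y}u^2(y)|\nabla\varphi|^2(y)$, giving the stated constant. Three points of care: first, $(\varphi(x)+\varphi(y))^2$ is \emph{not} bounded by a multiple of $\varphi^2(x)$ alone, so the symmetry step $\sum_{x,y}b(x,y)(u(x)-u(y))^2\varphi^2(y)=\sum_{x,y}b(x,y)(u(x)-u(y))^2\varphi^2(x)$ is genuinely needed, not optional bookkeeping; second, the absorption has no slack whatsoever --- the literal unweighted split $\tfrac12 A^2+\tfrac12 B^2$ leaves a positive multiple of $\sum_{x,y}b(x,y)\varphi^2(x)(u(x)-u(y))^2$, which cannot be controlled by $\sum_x u^2(x)|\nabla\varphi|^2(x)$, so only the exact weights above yield the constant $\tfrac12$; third, a wording issue: you cannot ``simply discard'' the nonpositive term and then absorb into it later --- it must be retained, as your closing sentence indicates you intend.
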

\begin{proof}
 Under stronger assumptions on the geometry of the graph but less restrictions on $\varphi$ the inequality is contained in  \cite[Lemma~3.4]{HKMW13}. It is obtained by rearranging the  sums and an elementary estimate; the formal manipulations are given in the proof of \cite[Lemma~3.4]{HKMW13} and the lemmas preceding it. That the involved sums converge absolutely and so rearranging them is possible is guaranteed by the assumption $u \in \mathcal F (X)$ and $\varphi \in C_c(X)$. 
\end{proof}

 We use the Caccioppoli inequality to obtain the following a priori estimate. Implicitly it also appears in the proof of Grigor'yan's lemma in the continuum case, cf. \cite[Proof of Theorem~9.2]{Gri99}.

\begin{lemma}[Basic estimate]\label{lemma:general estimate}
Let $0 < T \leq \infty$ and let $u:(0,T) \times X \to \R$ be a solution to the heat equation with initial value $u_0$. Let $K \subseteq X$ finite. For $0 \leq t < T$, let $\varphi_t \in C(X)$ with $\mathrm{supp} \,\varphi_t \subseteq K$ be  such that for each $x \in X$ the function  $t \mapsto\varphi_{t}(x)$ is continuously differentiable on $(0,T)$ and continuous at $0$. Then for all $0< \delta \leq t$  we have
\begin{align*}
 \sum_{x \in X}u_t^2(x) &\varphi_t^2(x) m(x) \leq \sum_{x \in X}u_{t-\delta}^2(x)\varphi_{t-\delta}^2(x) m(x)\\  \
 &+\int_{t-\delta}^t \sum_{x \in X}u_\tau^2(x) \left(\partial_\tau\varphi_{\tau}^2(x) m(x) + |\nabla\varphi_{\tau}|^2(x)\right)  d\tau.
\end{align*}
\end{lemma}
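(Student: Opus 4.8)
The plan is to differentiate the quantity $J(t) = \sum_{x \in X} u_t^2(x)\varphi_t^2(x) m(x)$ in time and integrate the resulting expression over $[t-\delta, t]$, using the Caccioppoli inequality of Lemma~\ref{lemma:Caccioppoli} to control the troublesome term that arises. First I would compute, for fixed $\tau \in (0,T)$, the derivative
\begin{align*}
\frac{d}{d\tau} J(\tau) = \sum_{x \in X} 2 u_\tau(x)(\partial_\tau u_\tau)(x)\varphi_\tau^2(x) m(x) + \sum_{x \in X} u_\tau^2(x)(\partial_\tau \varphi_\tau^2)(x) m(x).
\end{align*}
Since $u$ solves the heat equation, I would substitute $\partial_\tau u_\tau = -\mathcal{L}u_\tau$ into the first sum, turning it into $-2\sum_{x} \mathcal{L}u_\tau(x)\, u_\tau(x)\varphi_\tau^2(x) m(x)$. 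Because $\varphi_\tau \in C_c(X)$ and $u_\tau \in \mathcal{F}(X)$, Lemma~\ref{lemma:Caccioppoli} applies with $u = u_\tau$ and $\varphi = \varphi_\tau$, bounding this first sum above by $\sum_{x} u_\tau^2(x) |\nabla \varphi_\tau|^2(x)$. This gives the pointwise-in-$\tau$ differential inequality
\begin{align*}
\frac{d}{d\tau} J(\tau) \leq \sum_{x \in X} u_\tau^2(x)\left( (\partial_\tau \varphi_\tau^2)(x) m(x) + |\nabla \varphi_\tau|^2(x)\right).
\end{align*}
Integrating this from $t-\delta$ to $t$ then yields exactly the claimed estimate, since $J(t) - J(t-\delta) = \int_{t-\delta}^t \frac{d}{d\tau}J(\tau)\, d\tau$.

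The main obstacle, and the point requiring genuine care rather than formal manipulation, is justifying the differentiation of $J(\tau)$ under the summation sign and the absolute convergence of all sums involved. The hypotheses are tailored to this: $K$ is finite and $\mathrm{supp}\,\varphi_\tau \subseteq K$, so for each fixed $\tau$ the sum defining $J(\tau)$ and the sum $\sum_x u_\tau^2 |\nabla \varphi_\tau|^2$ are effectively finite sums (the latter ranges over $x$ with $\varphi_\tau(x) \neq 0$, hence over $K$, although $|\nabla \varphi_\tau|^2(x)$ involves neighbors of $x$). The assumption that $t \mapsto u_t(x)$ is continuously differentiable and $t \mapsto \varphi_t(x)$ is $C^1$ on $(0,T)$ for each $x$, combined with the finiteness of $K$, allows term-by-term differentiation: a finite sum of differentiable functions is differentiable with the sum of the derivatives. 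The term $\sum_x \mathcal{L}u_\tau(x) u_\tau(x)\varphi_\tau^2(x) m(x)$ is a finite sum over $x \in K$ with each $\mathcal{L}u_\tau(x)$ well-defined because $u_\tau \in \mathcal{F}(X)$, so the Caccioppoli step is legitimate.

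I would then verify that $\tau \mapsto \frac{d}{d\tau}J(\tau)$ is integrable on $[t-\delta, t]$ so that the fundamental theorem of calculus applies, which follows from the continuity of the integrand in $\tau$ granted by the $C^1$ assumptions and the finiteness of $K$. A final point to check is the boundary behavior: the statement allows $\delta = t$, in which case one integrates down to $\tau = 0$ and uses the continuity of $\varphi_\tau$ at $0$ together with $u_\tau(x) \to u_0(x)$ to identify $J(t-\delta) = J(0) = \sum_x u_0^2(x)\varphi_0^2(x) m(x)$; the continuity hypotheses at $0$ built into the definition of a solution and into the assumption on $\varphi$ ensure this limit is the correct value. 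No estimate beyond Caccioppoli is needed, and the discreteness of $X$ plays no essential role beyond making $C_c(X)$ and finite $K$ available.
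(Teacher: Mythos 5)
Your proposal is correct and takes essentially the same approach as the paper: the paper evaluates the quantity $I=\int_{t-\delta}^t \sum_{x\in X}\partial_\tau u_\tau^2(x)\varphi_\tau^2(x)m(x)\,d\tau$ in two ways (partial integration in time on one hand, and the heat equation plus the Caccioppoli inequality on the other), which is precisely your differentiate-$J(\tau)$-then-integrate argument written in integrated form. Your justification of term-by-term differentiation and convergence via the finite support in $K$ matches the role this hypothesis plays in the paper's proof.
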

\begin{proof}
 We set $s = t - \delta$ and evaluate the integral 
 $$I:= \int_{s}^t \sum_{x \in X} \partial_{\tau}u_\tau^2(x)\varphi_{\tau}^2(x)  m(x) d\tau $$
 in two ways. First we use partial integration and compute
 $$I = \left. \sum_{x \in X} u_\tau^2(x)\varphi_{\tau}^2(x)  m(x) \right|^t_s - \int_{s}^t \sum_{x \in X} u_\tau^2(x) \partial_{\tau}\varphi_{\tau}^2(x)  m(x) d\tau. $$
 Secondly, we use that $u$ solves the heat equation and the Caccioppoli type inequality of Lemma~\ref{lemma:Caccioppoli} to obtain
 \begin{align*}
  I &= 2\int_{s}^t \sum_{x \in X} \partial_{\tau}u_\tau(x)u_\tau(x)\varphi_{\tau}^2(x) m(x) d\tau\\ 
  &=- 2\int_{s}^t \sum_{x \in X} \mathcal{L}u_\tau(x)u_\tau(x)\varphi_{\tau}^2(x) m(x) d\tau \\
  & \leq \int_{s}^t \sum_{x \in X}u_\tau^2(x)  |\nabla\varphi_{\tau}|^2(x)  d\tau. 
 \end{align*}
 This finishes the proof.
\end{proof}

  The following lemma is the main new technical tool of this paper. It compares the size of a solution to the heat equation on a small ball at a given time with its size on a larger ball at an earlier time. More precisely, we show that  under a growth condition \eqref{inequality:estimate assumption}   a solution to the heat equation $u$  satisfies the inequality
$$\sum_{x \in B_r}  u_{t}^2(x)  m(x) \leq F \sum_{x \in B_{R}}u_{t-\delta}^2(x) m(x)  + \eqref{error1},$$
where \eqref{error1} is an  error term and $ F>1 $. An  error term of the form \eqref{error1} also appears in the manifold case, while  the constant $ F $  is a result of the non-local setting. For small enough $\delta$ (for a precise choice see Lemma~\ref{lemma:grigoryans estimate} below) and $R = E r$ for some constant $E > 1$, the error term \eqref{error1} is of order $r^{-2}e^{-f(Er)}$ if the graph is globally local with respect to $f$. This is Grigor'yan's estimate on globally local graphs, see Lemma~\ref{lemma:grigoryans estimate} below.

\begin{lemma}[Main technical estimate]\label{lemma:main estimate}
 Let $b$ be a graph over $(X,m)$ and let  $d$ be an intrinsic pseudo metric with finite distance balls and with finite jump size $s < \infty$.  Let $0 < T \leq \infty$ and let $u:(0,T) \times X \to \R$ be a  solution to the heat equation with initial value $u_0$. Let $f:(0,\infty) \to (0,\infty)$ be an increasing function such that for each $r > 0$ the following inequality holds
 \begin{align}
  \int_0^T  \sum_{x \in B_r} |u_t(x)|^2 m(x)dt\leq e^{f(r)}. \tag{GC} \label{inequality:estimate assumption}
 \end{align}
 For any $ 2s < r < R$, $0 < \lambda < 1$,  and $0 < \delta \leq t < T$, suppose $C>0, \varepsilon>0$ are constants such that 
 $$C\ge 8\exp\left(\frac{s_{r-2s }(4(R-r)+2s)}{C\delta}\right)\quad\mbox{ and }\quad\varepsilon\ge \frac{2s_{r-2s }^2}{C}. $$
 Then,
 \begin{align}
 \sum_{x \in B_r}  u_{t}^2(x)  m(x) \leq&  
 e^{\frac{\varepsilon}{C\delta}}\sum_{x \in B_{R}}u_{t-\delta}^2(x) m(x) \notag\\
&+  e^{\frac{2\varepsilon}{C\delta}}\frac{2}{(1-\lambda)^2(R - r)^2} e^{-\frac{ ( \lambda(R - r) - s)_+^2}{C \delta} + f(R + s)}. \tag{ET}\label{error1}
\end{align}
\end{lemma}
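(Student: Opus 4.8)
The plan is to apply the basic estimate of Lemma~\ref{lemma:general estimate} with a carefully chosen, time-dependent cut-off function $\varphi_\tau$, and then to control the resulting error terms using the growth condition \eqref{inequality:estimate assumption} together with global locality via the jump sizes $s_r$ and $s_{r-2s}$. The shape of the conclusion—an exponential prefactor $e^{\varepsilon/(C\delta)}$ on the main term plus a Gaussian-type error of the form $e^{-(\dots)^2/(C\delta)+f(R+s)}$—strongly suggests the Grigor'yan choice $\varphi_\tau(x) = \exp\!\big(\xi(\tau) \psi(x)\big)$ or a closely related form, where $\psi$ is built from the distance function $d(o,\cdot)$ (cut off to be supported in $B_R$) and $\xi(\tau)$ is a time weight that increases as $\tau$ decreases from $t$ to $t-\delta$. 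The point of such a choice is that on the spatial integrand the combination $\partial_\tau\varphi_\tau^2 \, m + |\nabla\varphi_\tau|^2$ can be made nonpositive except in a controlled region, which is exactly what forces the balance between the $1/(C\delta)$ rate and the $\varepsilon$, $\lambda(R-r)-s$ quantities appearing in \eqref{error1}.

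First I would fix a cut-off profile $\eta$ that equals $1$ on $B_r$, vanishes outside $B_R$, and interpolates linearly in $d(o,\cdot)$ across the annulus $B_R \setminus B_r$, so that $\eta$ is Lipschitz with constant controlled by $1/(R-r)$; the factor $(1-\lambda)$ and the cut radius $\lambda(R-r)$ in the error term indicate that the decay of the test function should really start at an intermediate radius, so the profile should be constant on a slightly larger ball than $B_r$ before decaying, giving the gap $\lambda(R-r)$. I would then set $\varphi_\tau(x) = \eta(x)\exp\!\big(g_\tau(x)\big)$ where $g_\tau$ carries the Gaussian weight $-d(o,x)^2/(\text{something}\cdot C(t-\tau+\delta))$-type dependence whose time derivative produces the $\exp(-(\cdots)^2/(C\delta))$ factor. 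Plugging this into Lemma~\ref{lemma:general estimate} and evaluating at the lower endpoint $t-\delta$ produces the first summand over $B_R$ with the prefactor $e^{\varepsilon/(C\delta)}$; the boundary term at $t$ reproduces the left-hand side $\sum_{x\in B_r} u_t^2(x)m(x)$ after using $\varphi_t \ge 1$ there.

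The genuinely non-local and hardest part will be estimating $|\nabla\varphi_\tau|^2(x) = \sum_y b(x,y)(\varphi_\tau(x)-\varphi_\tau(y))^2$ and showing that the integrand $\partial_\tau\varphi_\tau^2(x)\,m(x) + |\nabla\varphi_\tau|^2(x)$ is, up to the explicit error, dominated by the time-derivative term. In the local (manifold) case one uses $|\nabla\varphi|^2 \le \varphi^2 |\nabla g|^2$ pointwise via the chain rule; here the jump terms force us to compare $\exp(g_\tau(x))$ and $\exp(g_\tau(y))$ for neighbors $x\sim y$, and the difference $|g_\tau(x)-g_\tau(y)|$ is controlled by $d(x,y)$ times the Lipschitz slope of $g_\tau$, but only up to an error governed by the jump size. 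This is precisely where $s_{r-2s}$ enters: neighbors that straddle the cut region can jump a distance up to $s_{r-2s}$, and the elementary inequality $(e^a - e^b)^2 \le (a-b)^2 e^{2\max(a,b)}\cdots$ together with $\sum_y b(x,y)d(x,y)^2 \le m(x)$ (the intrinsic metric condition) converts the gradient into a controlled multiple of $\varphi_\tau^2 m$, but the non-convexity of the exponential over a jump of size $s_{r-2s}$ produces the constant $C \ge 8\exp\!\big(s_{r-2s}(4(R-r)+2s)/(C\delta)\big)$, which is exactly an implicit (fixed-point–style) lower bound on $C$ ensuring the exponential distortion from the jumps stays bounded by a factor absorbed into $C$. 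I expect that carefully tracking how the jump size $s_{r-2s}$ inflates both the slope comparison and the support—since the support of $\varphi_\tau$ can spill from $B_R$ into $B_{R+s}$ across one edge, explaining the $f(R+s)$ in the error—will be the main bookkeeping obstacle. Finally I would invoke \eqref{inequality:estimate assumption} to bound $\int_{t-\delta}^t \sum_{x\in B_{R+s}} u_\tau^2(x)m(x)\,d\tau \le e^{f(R+s)}$, multiply by the worst-case value of the (now nonpositive away from the annulus) weight, which contributes the Gaussian factor $e^{-(\lambda(R-r)-s)_+^2/(C\delta)}$ and the combinatorial prefactor $2/\big((1-\lambda)^2(R-r)^2\big)$, and collect terms to reach \eqref{error1}.
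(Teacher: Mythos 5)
Your proposal takes essentially the same route as the paper: its proof applies Lemma~\ref{lemma:general estimate} to $\varphi_\tau(x)=\eta(x)e^{\xi(x,\tau)}$, where $\eta$ is the linear cut-off equal to $1$ on $B_{r_\lambda}$ ($r_\lambda=(1-\lambda)r+\lambda R$) and vanishing outside $B_R$, and $\xi(x,\tau)=-\bigl((d(x,o)-r)_+^2+\varepsilon\bigr)/\bigl(C(t+\delta-\tau)\bigr)$, and then splits the error integrand into a cut-off-gradient part (estimated via the Lipschitz bound on $\eta$, the intrinsic-metric inequality, and \eqref{inequality:estimate assumption} at radius $R+s$, which yields the Gaussian error term) and a weight part shown to be nonpositive using $(1-e^a)^2\le a^2e^{2a\vee 0}$ together with the hypotheses on $C$ and $\varepsilon$ --- exactly the architecture you outline. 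The only details the paper fixes that your sketch leaves loose are that the Gaussian weight is built from $\rho(x)=(d(x,o)-r)_+$ rather than $d(o,x)$ (so that $\varphi_\tau$ is constant on $B_r$, as your requirement $\varphi_t\ge 1$ there implicitly forces) and that $\varepsilon$ sits inside the exponent, where its time derivative supplies the negative term absorbing the residual jump contribution of size $2s_{r-2s}^2/C$ near $B_r$ --- precisely the reason for the hypothesis $\varepsilon\ge 2s_{r-2s}^2/C$.
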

\begin{proof}
We set $r_\lambda = (1 - \lambda)r + \lambda R$. We apply Lemma~\ref{lemma:general estimate} to the functions $u$ and 

$$\varphi_\tau(x) = \eta(x) e^{\xi(x,\tau)},$$
where
$$\eta(x) = \left(1 - \frac{d(x,B_{r_\lambda})}{(R - r_\lambda)}\right)_+,$$
and  
$$\xi(x,\tau) = - \frac{\rho(x)^2 +\varepsilon}{C(t+\delta - \tau)},$$
with
$$\rho(x) = (d(x,o) - r)_+,$$ 
%
where $ x\in X $ and $ 0\leq \tau<t+\delta $.
The function $\varphi_{\tau}$ equals $\exp(- \frac{\varepsilon}{C(t+\delta - \tau)})$ on $B_r$, vanishes outside of $B_{R}$ and  satisfies $0 \leq \varphi_{\tau} \leq \exp(- \frac{\varepsilon}{C(t+\delta - \tau)})$. Hence, Lemma~\ref{lemma:general estimate} implies
\begin{align*} 
 e^{- \frac{2\varepsilon}{C\delta}}\sum_{x \in B_r} u_t^2(x)  &m(x) \leq e^{- \frac{\varepsilon}{C\delta}}\sum_{x \in B_{R}}u_{t-\delta}^2(x) m(x)\\
 &+\int_{t-\delta}^t \sum_{x \in X}u_\tau^2(x) \left(\partial_\tau\varphi_{\tau}^2(x) m(x) + |\nabla\varphi_{\tau}|^2(x)\right)  d\tau. 
 \end{align*}
In order to obtain the error term \eqref{error1}  we estimate
\begin{align*}
 \partial_\tau\varphi_{\tau}^2(x) m(x) + |\nabla \varphi_\tau|^2(x) =& 2\eta^2(x) e^{2\xi(x,\tau)} \partial_\tau \xi(x,\tau) m(x) \\
 &+ \sum_{y \in X}b(x,y) \left(\eta(x) e^{\xi(x,\tau)} - \eta(y) e^{\xi(y,\tau)} \right)^2. 
\end{align*}
Since 
$$\eta(x)e^{\xi(x,\tau)} - \eta(y)e^{\xi(y,\tau)} = \eta(x) (e^{\xi(x,\tau)} - e^{\xi(y,\tau)}) + e^{\xi(y,\tau)}( \eta(x)  - \eta(y)),$$
it satisfies
 \begin{align*} 
 \partial_\tau \varphi_{\tau}^2(&x) m(x) + |\nabla\varphi_{\tau}|^2(x)  \leq  2 \sum_{y \in X}b(x,y) e^{2\xi(y,\tau)}(\eta(x) - \eta(y))^2 \\
 &+ 2 \eta^2(x) e^{2\xi(x,\tau)} \left( \partial_\tau \xi(x,\tau) m(x) + \sum_{y \in X}b(x,y) (1- e^{\xi(y,\tau) - \xi(x,\tau)})^2\right).
 \end{align*}
 The terms which appear in the previous inequality are denoted by   
 $$I_1(x,\tau) = \sum_{y \in X}b(x,y) e^{2\xi(y,\tau)}(\eta(x) - \eta(y))^2$$
 and 
 $$I_2(x,\tau) = \partial_\tau \xi(x,\tau) m(x) + \sum_{y \in X}b(x,y) (1- e^{\xi(y,\tau) - \xi(x,\tau)})^2.$$
 Therefore, it suffices to show that the quantity
 \begin{align}\label{error term}
  2\int_{t-\delta}^t \sum_{x \in X} u_\tau^2(x) (I_1(x,\tau) + \varphi_\tau^2(x) I_2(x,\tau)) d\tau \tag{$\heartsuit$}
 \end{align}
 can be estimated by the error term \eqref{error1}.

 We start with controlling the summand containing $I_1$ by \eqref{error1}. The function $\eta$ equals $1$ on $B_{r_\lambda}$ and equals $0$ on $X\setminus B_{R}$. Hence, $I_1(x,\tau)$ vanishes for $x \in B_{r_\lambda - s} \cup  X \setminus B_{R+s}$. Also for $x \in B_{R+s} \setminus B_{r_\lambda - s}$ and $ y\in B_{r_\lambda - s} $ with $ y\sim x $ the term $ (\eta(x)-\eta(y)) $ vanishes. Moreover, for   $y\in X\setminus  B_{r_{\lambda}-s} $  we obtain
 $$- \rho(y)^2 \leq - (r_\lambda - r - s)_+^2 = - ( \lambda(R - r) - s)_+^2. $$   
 If, additionally, $t-\delta \leq \tau \leq t$, this implies
 $$e^{2\xi(y,\tau)} \leq e^{-\frac{2 ( \lambda(R - r) - s)_+^2+2\varepsilon}{C (t + \delta - \tau)}} \leq e^{-\frac{ ( \lambda(R - r) - s)_+^2+\varepsilon}{C \delta}}.$$
 With  these observations an integration over space and time yields 
\begin{align*} 
  &\int_{t-\delta}^t \sum_{x \in X}u_\tau^2(x) I_1(x,\tau)\, {\rm d}\tau\\
  &= \int_{t-\delta}^t \sum_{x \in B_{R+s}\setminus B_{r_\lambda-s}}u_{\tau}^2(x)\sum_{y \in X}b(x,y) e^{2\xi(y,\tau)}(\eta(x) - \eta(y))^2\, {\rm d}\tau \\ 
  &\leq \int_{t-\delta}^t \sum_{x \in B_{R+s}\setminus B_{r_\lambda-s}}u_{\tau}^2(x)  e^{-\frac{ ( \lambda(R - r) - s)_+^2+\varepsilon}{C \delta}} \sum_{y \in X\setminus  B_{r_{\lambda}-s}}b(x,y) (\eta(x) - \eta(y))^2\, {\rm d}\tau \\ 
  &\leq  \frac{1}{(1-\lambda)^2(R - r)^2} e^{-\frac{ ( \lambda(R - r) - s)_+^2+\varepsilon}{C \delta}}  \int_{t-\delta}^t \sum_{x \in B_{R+s}\setminus B_{r_\lambda-s}}u_{\tau}^2(x) m(x)\, {\rm d}\tau \\ 
  &\leq    \frac{1}{(1-\lambda)^2(R - r)^2} e^{-\frac{ ( \lambda(R - r) - s)_+^2+\varepsilon}{C \delta} + f(R + s)}.
 \end{align*}
 For the second to last inequality we used that $\eta$ is $R - r_\lambda = (1-\lambda)(R-r)$-Lipschitz and that the metric $d$ is intrinsic. For the last inequality we used that $u$ satisfies the growth condition \eqref{inequality:estimate assumption}. This yields the error term \eqref{error1} in the desired inequality.
 
 We now estimate $I_2(x,\tau)\leq 0$ whenever it appears in  \eqref{error term}. As seen above in the discussion preceding \eqref{error term}, we need to multiply $I_2$ by $u^2\varphi^2$ and then integrate over space and time. Since $\varphi$ vanishes on $X \setminus B_{R}$, it suffices to control $I_2$ on $B_{R}$. Obviously we have 
 $$\partial_\tau \xi(x,\tau) m(x) = - \frac{\rho^2(x)+\varepsilon}{C(t+\delta - \tau)^2}m(x) \leq 0.$$
   Whenever $x \in B_{r-s}$ and $y \sim x$ the function $\rho$ and the exponent $\xi$ satisfy $\rho(x) = \rho(y) = 0 = \xi(x,\tau) = \xi(y,\tau)$. Therefore, $I_2\le 0$ also holds on $B_{r-s}$.
   
    We employ the following observation and some elementary inequalities to estimate $I_2$ on $B_{R}\setminus B_{r-s}$. For $t - \delta \leq \tau \leq t$, $x \in B_{R}\setminus B_{r-s}$ and $x \sim y$ we have
 \begin{align*}
  |\xi(x,\tau) - \xi(y,\tau)| &= \frac{|\rho(x) - \rho(y)||\rho(x) + \rho(y)|}{C(t+\delta - \tau)} \notag \\
  &\leq \frac{d(x,y)(2\rho(x) + d(x,y))}{C(t + \delta - \tau)} \notag \\
  &\leq \frac{ s_{r-2s} (2(R-r) + s)}{C\delta}.
 \end{align*}
 Note that for the last inequality we used  $d(x,y) \leq s_{r-2s}$ whenever $x \in X \setminus B_{r-s}$ and $y \sim x$ as well as $ \rho(x)\leq R-r $, $ x\in B_{R} $ and $ d(x,y)\leq s $, $ x\sim y $. 
 As we proceed, we use these estimates and  the following elementary inequalities
 \begin{equation*}
(1-e^a)^2\le 
a^2 e^{2a\vee 0} \qquad \mbox{and}\qquad (a+b)^2\le 2(a^2+b^2),\qquad a,b\in \R.
 \end{equation*}
  Let $t - \delta \leq \tau \leq t$ and $x \in B_{R}\setminus B_{r-s}$. Then, for $ y\sim x $ with $ \rh(y)\leq \rh(x) $
  \begin{align*}
  (1- e^{\xi(y,\tau) - \xi(x,\tau)})^2 &\leq |\xi(x,\tau) - \xi(y,\tau)|^2 e^{2|\xi(y,\tau) - \xi(x,\tau)|}\\
  &\leq \frac{d(x,y)^2(\rho(x) + \rho(y))^2}{C^2(t + \delta - \tau)^2} e^{  \frac{ s_{r-2s} (4(R-r) + 2s)}{C\delta}}\\
  &\leq \frac{4\rho^2(x)}{C^2(t + \delta - \tau)^2}e^{  \frac{ s_{r-2s} (4(R-r) + 2s)}{C\delta}} d(x,y)^{2}
  \end{align*}
  and, for $ y\sim x $ with $ \rh(y)> \rh(x) $,
  \begin{align*}
   (1- e^{\xi(y,\tau) - \xi(x,\tau)})^2 &\leq |\xi(x,\tau) - \xi(y,\tau)|^2\\
   &\leq\frac{d(x,y)^2(\rho(x) + d(x,y))^2}{C^2(t + \delta - \tau)^2}\\
    &\leq\frac{4\rho^{2}(x) + 2s^2_{r-2s}}{C^2(t + \delta - \tau)^2}d(x,y)^2
  \end{align*}
  where we use in the last inequality that $ d(x,y)\leq s_{r-2s} $ since $ x\in X\setminus B_{r-s} $ and $ y\sim x $.
  Thus, taking these two estimates together we obtain
 \begin{align*}
  \sum_{y \in X}&b(x,y) (1- e^{\xi(y,\tau) - \xi(x,\tau)})^2 \\
  &\leq   \frac{4\rho^2(x)e^{  \frac{ s_{r-2s} (4(R-r) + 2s)}{C\delta}}+4\rho^{2}(x) + 2s^2_{r-2s}}{C^2(t + \delta - \tau)^2} \sum_{y\in X}b(x,y)d(x,y)^{2}\\
   &\leq \frac{8\rho^2(x)e^{  \frac{ s_{r-2s} (4(R-r) + 2s)}{C\delta}}+2  s_{r-2s}^2}{C^2(t + \delta - \tau)^2}m(x) .
\end{align*}
 Altogether, for $t-\delta \leq \tau \leq t$ we obtain $I_2(x,\tau) \le 0$ whenever $x \in B_{r-s}$ and,  for $x\in B_R\setminus B_{r-s}$, we obtain
\begin{align*} 
 I_2(x,\tau) &= -\frac{\rho^2(x)+\varepsilon}{C(t+\delta - \tau)^2} m(x) + \sum_{y \in X}b(x,y) (1- e^{\xi(y,\tau) - \xi(x,\tau)})^2\\
 &\leq \frac{ m(x)}{C(t+\delta-\tau)^2} \left(\rho^2(x)\left(\frac{8}{C}e^{  \frac{ s_{r-2s} (4(R-r) + 2s)}{C\delta}} -1 \right)+  \left(\frac{2s_{r-2s}^2}{C}-\varepsilon\right)\right)\\
 &\leq 0
\end{align*}
by our assumptions on $C$ and $\varepsilon$. This finishes the proof.
\end{proof}


Next, we choose the parameters in the main technical estimate above to obtain a discrete version of Grigor'yan's inequality, \cite[Theorem 9.2]{Gri99}.

\begin{lemma}[Grigor'yan's inequality]\label{lemma:grigoryans estimate}
 Assume the situation of Lemma~\ref{lemma:main estimate}. Assume further that  the underlying graph with the given metric is globally local with respect to $f$.  Then there exist constants $r_0 ,D,E,F,G  > 1$  such that for all  $r \geq r_0$ and  $0 <\de \leq t < T$  with
 $$\delta  \le \frac{r^2}{D f(Er)}$$
 the following inequality holds
 \begin{align}\label{inequality:grigoryans estimate}
  \sum_{x \in B_r}  u_{t}^2(x)  m(x) \leq F\sum_{x \in B_{Er}}u_{t-\delta}^2(x) m(x) +  \frac{Ge^{-f(Er)}}{r^2}. \tag{GI}
 \end{align}

\end{lemma}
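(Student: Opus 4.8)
The plan is to specialize the free parameters $R$, $\lambda$, $C$, $\varepsilon$ in Lemma~\ref{lemma:main estimate} to force the error term \eqref{error1} into the clean shape $Ge^{-f(Er)}/r^2$ while keeping the prefactor of the main sum bounded by a constant $F$. The natural first choice is to set $R = Er$ for a constant $E > 1$ to be fixed, and $\lambda$ a fixed fraction (say $\lambda = 1/2$); this makes $R - r = (E-1)r$ comparable to $r$, so the Lipschitz/error prefactor $2/((1-\lambda)^2(R-r)^2)$ is exactly of order $r^{-2}$, matching the target. The Gaussian exponent $-(\lambda(R-r)-s)_+^2/(C\delta)$ must then be made to dominate $f(R+s) = f(Er+s)$ by at least $2f(Er)$, so that after combining with the $e^{2\varepsilon/(C\delta)}$ factor the whole error is at most $Ge^{-f(Er)}/r^2$.

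First I would exploit global locality. The hypothesis \eqref{inequality:GL} gives a constant $B>0$ with $s_r \le Br/f(Ar)$ for all large $r$; feeding $r-2s$ into this and using that $f$ is increasing, the quantity $s_{r-2s}$ decays like $r/f(A r)$. The key is to choose $C$ proportional to $s_{r-2s}^2/\varepsilon$ (forced by the second constraint $\varepsilon \ge 2s_{r-2s}^2/C$, taken with equality up to a constant) and to verify the first constraint $C \ge 8\exp(s_{r-2s}(4(R-r)+2s)/(C\delta))$. Under the stated time restriction $\delta \le r^2/(Df(Er))$, the exponent inside that constraint becomes, after substituting the decay bound for $s_{r-2s}$, a bounded quantity provided $D$ is chosen large enough relative to $A$, $B$, $E$; this is what pins down $C$ as an absolute constant for all large $r$. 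Then $\varepsilon$ is of order $s_{r-2s}^2$, so $\varepsilon/(C\delta)$ and hence the factors $e^{\varepsilon/(C\delta)}$, $e^{2\varepsilon/(C\delta)}$ are bounded by constants — this delivers the constant $F$ in front of the main sum.

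The remaining and most delicate point is the Gaussian exponent. With $C$ now a constant and $\delta \le r^2/(Df(Er))$, the term $(\lambda(R-r)-s)_+^2/(C\delta)$ is bounded below by a constant multiple of $((E-1)r/2)^2 \cdot Df(Er)/r^2 = c\,D(E-1)^2 f(Er)$. I would choose $E$ and then $D$ so that this lower bound exceeds $f(Er+s) + 2f(Er)$; since $f$ is increasing one controls $f(Er+s)$ by enlarging $E$ slightly, and then $D$ absorbs the comparison, so the net exponent is $\le -2f(Er)$. Combining the bounded factor $e^{2\varepsilon/(C\delta)}$, the $r^{-2}$ prefactor, and $e^{-2f(Er)+f(Er+s)} \le e^{-f(Er)}$ yields the error bound $Ge^{-f(Er)}/r^2$.

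The main obstacle I anticipate is the simultaneous consistency of the constraints: $C$, $\varepsilon$, $\delta$, $R$, and $\lambda$ are coupled, and one must check that a single set of absolute constants $r_0, D, E, F, G$ works uniformly for all $r \ge r_0$ and all admissible $\delta$. The crux is that the first constraint on $C$ is self-referential (as $C$ appears inside its own exponential), so I would verify it by showing the exponent is bounded by a constant independent of $C$ once $\delta$ is at the threshold — which is precisely where global locality and the choice of $D$ enter — and then fix $C$ as the resulting absolute constant. Getting the interplay between enlarging $E$ (to beat $f(Er+s)$ via monotonicity) and enlarging $D$ (to win the factor $2$ in the exponent) right, without circularity, is the technical heart of the argument.
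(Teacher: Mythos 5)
Your overall strategy is the right one and matches the paper's: specialize $R$, $\lambda$, $C$, $\varepsilon$ in Lemma~\ref{lemma:main estimate}, use global locality to tame the self-referential constraint on $C$, and handle $f(R+s)$ by monotonicity with a slightly enlarged $E$ (the paper does this by working with an inner radius $R=E'r$, $1<E'<E<A$). But there is a genuine gap in how you treat the range of $\delta$. The lemma quantifies over \emph{all} $0<\delta\le r^2/(Df(Er))$, and your plan fixes $C$ as an absolute constant whose admissibility is ``verified at the threshold''. This cannot work: the hypothesis of Lemma~\ref{lemma:main estimate} reads $C\ge 8\exp\bigl(s_{r-2s}(4(R-r)+2s)/(C\delta)\bigr)$, and the time restriction is an \emph{upper} bound on $\delta$, hence a \emph{lower} bound on this exponent --- your claim that the restriction makes the exponent ``a bounded quantity'' has the inequality backwards. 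As $\delta\to 0$ (which is allowed) the exponent blows up, so the constraint fails for any fixed $C$; similarly $e^{\varepsilon/(C\delta)}=e^{2s_{r-2s}^2/(C^2\delta)}$ is unbounded as $\delta\to 0$, so no constant $F$ bounds the prefactor of the main sum. This uniformity is not a technicality: in the iteration that deduces Theorem~\ref{theorem:uniqueness class}, the time steps $\delta_k$ must sum exactly to $t$, so they cannot all sit at their thresholds.

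The missing idea, which is the paper's key maneuver, is to let $C$ scale inversely with $\delta$ rather than fix it: write every admissible $\delta$ as $\delta=r^2/(\alpha C f(Er))$ with $C$ as the free variable, so that the product $C\delta=r^2/(\alpha f(Er))$ is independent of both $C$ and $\delta$. Then the constraint exponent equals $a(r)=\alpha s_{r-2s}(4(E'-1)r+2s)f(Er)/r^2$, which global locality bounds by $4\alpha AB$ for large $r$ (here $E<A$ is used so that $f(Er)\le f(A(r-2s))$); with $D=8\alpha e^{4\alpha AB}$, the condition $\delta\le r^2/(Df(Er))$ is then \emph{equivalent} to $C\ge 8e^{4\alpha AB}\ge 8e^{a(r)}$, so the hypothesis on $C$ holds automatically for every admissible $\delta$, and $\varepsilon/(C\delta)=2\alpha s_{r-2s}^2 f(Er)/(Cr^2)$ stays uniformly bounded, which is what produces the constant $F$. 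Your resolution of the $D$-versus-$C$ circularity (both the constraint exponent and the Gaussian gain scale like $D/C$, so fix the ratio $D/C$ of order $8/(E'-1)^2$ first and then solve $C\ge 8e^{\kappa D/C}$) is fine and is exactly what the paper's parameter $\alpha$ encodes; it is the decoupling of $C\delta$ from $\delta$ that your proposal lacks, and without it the argument only proves the inequality for $\delta$ comparable to the threshold value $r^2/(Df(Er))$, not for all admissible $\delta$.
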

\begin{proof}
%
%
From the globally local assumption we have $ s_{0} =  s < \infty$ and that there exists $ A>1 $ and $ B>0 $ such that $ s_{r}\leq Br/f(Ar) $ for  all $r$ large enough.

Let $ 1<E'<E<A $ and $ R=E'r $. In order to determine the parameter $ r_{0} $ such that the asserted estimate holds we do not keep track of the precise constant but rather speak of estimates holding for $ r $ large enough. Whenever we do so, ``large enough'' will only depend on the constants $ E',E,A $ as well as on the jump size $ s $.

For a parameter $ \al $,  which is to be  chosen later, and variable $C >0$,  we
consider
\begin{align*}
\de(r,\al,C)= \frac{r^{2}}{\al Cf(Er)}
\qquad\mbox{
and}
\qquad
\varepsilon(r,C)=\frac{2s_{r-2s}^{2}}{C}.
\end{align*}
In order to apply the main technical estimate, Lemma~\ref{lemma:main estimate}, the constant $ C $ has to satisfy a lower bound depending on $\delta(r,\al, C), r$ and $R = E'r$. Using that the graph is globally local, we show that for our choice of parameters this bound is satisfied for $C$ large enough depending only on $A,B$ and $\alpha$.
To this end  consider the function 
 \begin{align*}
a(r):=\frac{\al s_{r-2s
	}(4(E'-1)r+2s)f(Er)}{r^{2}} =\frac{s_{r-2s	}(4(R-r)+2s)}{C\de(r,\al,C)},
\end{align*}	
where the last equality follows directly from the definition of $ \de $ and $ R=E'r $.
Note that for sufficiently large $ r $ we have $ f(Er)\leq f(A(r-2s)) $ since $ f $ is monotone increasing and $1 < E < A$. So, by the assumption $ s_{r}\leq Br/f(Ar) $  we infer that $ a $ can be bounded via the estimate
\begin{align*}
a(r)\leq 4\al BE'\frac{(r-2s)(r+2s)f(Er)}{r^{2}f(A(r-2s))}\leq 4\al BE'\leq 4\al A B
\end{align*}
for sufficiently large $ r $. Hence, choosing $ C\ge 8 e^{4\al A B} $ gives immediately
\begin{align*}
C\ge 8e^{4\al A B} \ge 8e^{a(r)}=8\exp\left({\frac{s_{r-2s	}(4(R-r)+2s)}{C\de(r,\al,C)}}\right)
\end{align*}
for $ r $ large enough. With this choice of $ C $, $ \varepsilon(r,C) $, $ \de(r,\al,C) $, we get with  $ R=E'r $, $ \lambda=1/2 $ as well as $ T>t\geq \de(r,\al,C)>0 $
 \begin{align*}
\sum_{x \in B_r}  u_{t}^2(x)  m(x) \leq&  
e^{\frac{\eps(r,C)}{C\de(r,\al,C)}}\sum_{x \in B_{E'r}}u_{t-\delta(r,\al,C)}^2(x) m(x) \notag\\
&+  e^{\frac{2\eps(r,C)}{C\de(r,\al,C)}}\frac{8}{(E' -1)^{2} r^2} e^{-\al \frac{ ((E'-1)r/2 - s)_+^2}{ r^{2}}f(Er) + f(E'r + s)}
\end{align*} 
for all $ r $ large enough by the main technical estimate, Lemma~\ref{lemma:main estimate}, above.

We proceed by estimating the exponential factor at the end of the second term by choosing $ \al $. There is $ \al>0 $ (which only depends on $ E' $ and $ s $) such that for all  $ r $ large enough  we have
\begin{align*}
\al \frac{ ((E'-1)r/2 - s)_+^2}{ r^{2} }\ge 2.
\end{align*}
Since  $Er\ge E'r+s  $ for $ r $ large enough and since $f$ is increasing, for large enough $r$ this  implies 
\begin{align*}
e^{-\al \frac{ ((E'-1)r/2 - s)_+^2}{ r^{2}}f(Er) + f(E'r + s)} \leq e^{-f(Er)}.
\end{align*}

Next, we bound the term $ \exp(\varepsilon/C\de) $. We use the bounds $ s_{r}\le Br/f(Ar) $, $ A>E>1 $,   $ C \ge  8 e^{4\al A B}$, to estimate
\begin{align*}
\frac{\varepsilon(r,C)}{C\delta(r,\al,C)}=\frac{2\al s_{r-2s}^{2}f(Er)}{Cr^{2}}\leq 2\al B^2 \frac{({r-2s})^{2}f(Er)}{Cr^{2}f(A(r-2s))^{2}}\leq 
\frac{\alpha B^2e^{-4\alpha AB}}{4  f(A(r-2s))}
\end{align*}
for $ r $ large enough. Since $f$ is increasing, there is  a constant $ F>1 $ such that 
\begin{align*}
e^{\frac{\varepsilon(r,C)}{C\delta(r,\al,C)}}\leq F
\end{align*}
for all  $ C\ge  8 e^{4\al AB} $ and $ r $ large enough.

As a last step, we choose 
\begin{align*}
G=\frac{8F^2}{(E'-1)^2}\quad \mbox{ and }\quad D=  8\al e^{4\al AB} .
\end{align*}
Note, that  the constants $\al,D, E,E',F,G$ do not depend on $ C $.  Since any $0< \de \leq t <T$ with
\begin{align*}
\de  \leq \frac{r^{2}}{Df(Er)}
\end{align*}
can be written as
$$\delta = \delta(r,\alpha,C) = \frac{r^{2}}{\alpha C f(Er)} $$
with $C \geq   8 e^{4\al AB}$, the above considerations show
 \begin{align*}
\sum_{x \in B_r}  u_{t}^2(x)  m(x) \leq&  
F\sum_{x \in B_{E'r}}u_{t-\delta}^2(x) m(x) +  G\frac{e^{-f(Er)}}{ r^2}
\end{align*} 
 for such $\delta$ and all $r$ large enough. This finishes the proof.
 \end{proof}

With the help of Grigor'yan's inequality we can establish the uniqueness class. The proof is based on an iteration procedure that is basically the same as on manifolds, cf. the proof of \cite[Theorem~9.2]{Gri99}.  Below we give the details for the convenience of the reader.

\begin{proof}[Proof of Theorem~\ref{theorem:uniqueness class}]  Let $u:(0,T) \times X \to \R $ be a solution to the heat equation with initial value $0$ as in the theorem. Moreover, let $r_0 > 0$ and $D,E, F,G > 1$  be  constants as in  Lemma~\ref{lemma:grigoryans estimate}.

We can assume without loss of generality that  $ f(r)\ge r $: Otherwise replace $ f $ with $ g(r)=f(r)\vee r $, which obviously satisfies $ e^{f(r)}\leq e^{g(r)} $. Moreover, 
$$  \int_{1}^{\infty}\frac{r}{g(r)} dr=\mathrm{Leb}(M)+\int_{[1,\infty)\setminus M}\frac{r}{f(r)}dr, $$ 
with $ M=\{r\ge 1\mid f(r)< r\} $. If $ \mathrm{Leb}(M)=\infty $, then $ \int r/g(r)dr=\infty $. Otherwise the bounded function $M \to  \R, r\mapsto r/f(r) $ is integrable over $ M $ and, therefore, the second integral over $ [1,\infty)\setminus M $ is infinite
whenever $ \int_{1}^{\infty}r/f(r) dr  =\infty$. Finally, using $ s_{r}\leq s_{0} =   s < \infty $ the globally local assumption $s_{r}\leq B'r/ g(Ar) $ for large $r$ is satisfied  with the slight modification of replacing the original $ B $ by $B' =  B\vee As $.

Thus,  there exists a constant $ H $ such that $$ F^{k}e^{-f(R_{k+1})}=  F^{k}e^{-f(E^{k+1}r)}\leq F^{k}e^{-E^{k+1}r}\leq H  $$ for all $ r\ge r_{0} $ and $ k\in\N_{0} $.


Let $r \geq r_0$  and   $R_k := E^k r$ for $k \geq 0$. Since $\int_1^\infty \frac{r}{f(r)}dr = \infty$ and $f$ is monotone increasing, it follows that
$$\sum_{k = 1}^\infty \frac{R_k^2}{f(ER_k)} = \infty. $$
Hence, for every $0 < t < T$ there exists a natural number $N$ and 
$$ \delta_k \leq \frac{R_k^2}{D f(ER_k)},\quad  k = 0,1,\ldots,N,$$
with $\sum_{k = 0}^N \delta_k = t$. An iterative application of Grigor'yan's inequality \eqref{inequality:grigoryans estimate} with the radii $R_k=E^{k}r$ and the time differences $\delta_k$ yields 
\begin{align*}
\sum_{x \in B_r} u_t(x)^2 m(x) &\leq F\sum_{x \in B_{R_1}} u_{t - \delta_0}^2(x) m(x) + \frac{Ge^{-f(R_{1})}}{R_0^2}\\
&\leq F^{N+1} \sum_{x \in B_{R_{N+1}}} u^2_{t - \sum_{k=0}^{N}\de_{k}}(x) m(x) + G \sum_{k = 0}^N \frac{F^{k}e^{-f(R_{k+1})}}{R_k^{2}}\\
&\leq  \frac{GHE^{2}}{E^{2}-1}\frac{1}{r ^{2}}.
\end{align*}
For the last inequality we used $ \sum_{k=0}^{N}\de_{k} = t$ and $ u_0 = 0$. Now, letting $r \to \infty$ yields $u_t = 0$ and the theorem is proven. 
\end{proof}

\section{Refinements}\label{section:refinements}

In this section we study properties of refinements of locally finite graphs.  We prove the stability of stochastic completeness under refinements and compare the volume growth of refinements with that of the given graph. Moreover, given an  increasing function $f:(0,\infty) \to (0,\infty)$ we show that every locally finite graph has a refinement that is globally local with respect to $f$. At the end of this section we use these insights to prove our main result Theorem~\ref{thm:main}. As a tool to reduce the proof to the case of locally finite graphs with finite jump size we also discuss the stability of stochastic completeness under truncating the edge weights.

Let $ b $ be a graph over $ (X,m) $, let $ d $ be a pseudo metric on $X$ and let $ n:X\times X\to\N_{0} $ be a symmetric function such that $ n(x,y)=0 $ if and only if $ b(x,y)=0 $ for $ x,y\in X $. For the refinement $ b' $ over $ (X',m') $ with respect to $ n $, we denote by $ \mathcal{L}' $ the Laplacian of $ b' $.

We prove  Theorem~\ref{thm:refinements} via  a slight modification of the weak Omori-Yau principle, which is taken from \cite[Theorem~2.2]{Huang11}.

\begin{lemma}[Weak Omori-Yau principle]\label{lemma:omori yau}
	Let $ b $ be a graph over $ (X,m) $.   The graph is stochastically incomplete if and only if there is $c > 0$ and $ \alpha>0 $ and a function   $ u\in \mathcal{F}(X) $ with $ \sup u < \infty$ such that $ \mathcal{L}u < -c$ on $ \Omega_{\al}=\{x\in X\mid u(x) > \sup u-\al\} $.
\end{lemma}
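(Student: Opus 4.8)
The plan is to prove the weak Omori-Yau characterization by relating both sides to the failure of uniqueness of bounded heat equation solutions, which the paper has already identified as equivalent to stochastic incompleteness. Recall from the set-up that the graph is stochastically complete if and only if every bounded (in space and time) solution to the heat equation with initial value $0$ vanishes. Hence stochastic incompleteness is equivalent to the existence of a nontrivial bounded solution with zero initial value. I would work with a time-independent substitute for such a solution. The standard device, going back to Khasminskii, is that stochastic incompleteness is equivalent to the existence of a nontrivial bounded $w \geq 0$ with $\mathcal{L}w \leq -\lambda w$ (equivalently $(\mathcal{L}+\lambda)w \leq 0$) for some $\lambda > 0$; such a $w$ arises as the resolvent/Laplace transform $\int_0^\infty e^{-\lambda t}(1 - e^{-tL}1)\,dt$ of the deficiency $1 - e^{-tL}1$, and nontriviality of this $w$ matches stochastic incompleteness.

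For the direction from stochastic incompleteness to the principle, I would take such a $w$, normalize $\sup w = 1$, and set $u = w$, $c = \lambda/2$ say. On the super-level set $\Omega_\al = \{u > \sup u - \al\}$ the inequality $\mathcal{L}u \leq -\lambda u$ forces $\mathcal{L}u \leq -\lambda(\sup u - \al)$, which is a negative constant bounded away from $0$ once $\al$ is chosen small enough relative to $\sup u$. This produces the required $c > 0$, $\al > 0$ and $u$ with $\mathcal{L}u < -c$ on $\Omega_\al$. For the converse, suppose such a $u$, $c$, $\al$ exist; I would build a nontrivial bounded solution (or equivalently a positive supersolution) to derive incompleteness. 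The idea is that $u$ behaves like a subsolution for the Laplace-transformed equation near its supremum, and by truncating/rescaling one obtains a bounded $v$ with $(\mathcal{L}+\lambda)v \leq 0$ that does not vanish, contradicting completeness via the Khasminskii criterion. Concretely, I expect to replace $u$ by $(u - (\sup u - \al))_+$ or a similar cut-off supported essentially on $\Omega_\al$ and verify the differential inequality there, using that $\mathcal{L}u < -c$ controls the sign.

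Since the lemma is quoted as a slight modification of \cite[Theorem~2.2]{Huang11}, the cleanest route is to cite that theorem and indicate precisely which modification is needed. The version in \cite{Huang11} presumably phrases the test either with $\mathcal{L}u \leq -c$ on a level set, or with a strict inequality, or uses a slightly different normalization of the constants $c$ and $\al$; I would state the original, then note that passing from the resolvent function $w$ to the level-set formulation only requires the elementary computation sketched above, together with the observation that $\mathcal F(X)$ membership and boundedness of $u$ are preserved under the truncation.

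The main obstacle will be the converse implication, namely fabricating a genuine positive bounded supersolution to $(\mathcal{L}+\lambda)v\le 0$ out of the purely local information $\mathcal{L}u < -c$ on $\Omega_\al$. The subtlety is that $\mathcal{L}u < -c$ is only controlled on the level set, so on its complement the Laplacian of the truncated function $v$ could have the wrong sign; handling the boundary of $\Omega_\al$ carefully — where $v$ transitions to $0$ and $\mathcal{L}$, being non-local, sees vertices on both sides — is exactly the delicate point, and is also why a direct appeal to \cite[Theorem~2.2]{Huang11} with an explicit description of the modification is the most economical way to present the proof.
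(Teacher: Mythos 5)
Your route is genuinely different from the paper's. The paper does not reprove the principle at all: its entire proof is the citation of \cite[Theorem~2.2]{Huang11} together with the remark that the argument given there, which is stated for \emph{nonnegative} $u$, goes through verbatim for $u \in \mathcal{F}(X)$ that is merely bounded above -- that positivity assumption is the ``slight modification,'' not (as you guessed) the normalization of the constants. Your plan is instead a self-contained proof via the Khasminskii-type criterion: stochastic incompleteness holds if and only if for some $\lambda > 0$ there is a nontrivial bounded $w \geq 0$ with $(\mathcal{L} + \lambda)w \leq 0$. Granting that criterion (it is standard for graphs and appears in the circle of references the paper cites, but is nowhere stated in the paper itself, so you must import it explicitly), your forward direction is complete and correct: normalizing $\sup w = 1$, on $\{w > 1 - \alpha\}$ with $\alpha \in (0,1)$ one gets $\mathcal{L}w \leq -\lambda w < -\lambda(1-\alpha) =: -c$, and bounded functions automatically lie in $\mathcal{F}(X)$ because $\sum_{y} b(x,y) < \infty$.

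The step you flag as the main obstacle -- the converse -- is in fact not delicate, and the non-locality you worry about works \emph{for} you, not against you. Take $v = \left(u - (\sup u - \alpha)\right)_+$, so that $0 \leq v \leq \alpha$, $\sup v = \alpha > 0$, and $v > 0$ exactly on $\Omega_\alpha$. Two observations close the argument. First, whenever $v(x) > 0$ one has $v(x) - v(y) \leq u(x) - u(y)$ for every $y$ (since $v(x) = u(x) - (\sup u - \alpha)$ while $v(y) \geq u(y) - (\sup u - \alpha)$); summing against $b(x,y)$, with both sums absolutely convergent because $u \in \mathcal{F}(X)$ and $v$ is bounded, gives $\mathcal{L}v(x) \leq \mathcal{L}u(x) < -c \leq -\tfrac{c}{\alpha}\, v(x)$ on $\Omega_\alpha$. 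Second, at points where $v(x) = 0$, nonnegativity of $v$ gives $\mathcal{L}v(x) = -\tfrac{1}{m(x)}\sum_{y} b(x,y) v(y) \leq 0 = -\tfrac{c}{\alpha}\, v(x)$ automatically: this is precisely the place where a non-local operator applied to a nonnegative function vanishing at $x$ has a favorable sign, so there is no boundary issue to handle. Hence $(\mathcal{L} + \lambda)v \leq 0$ on all of $X$ with $\lambda = c/\alpha$, and the Khasminskii criterion yields incompleteness. In summary, your proof works as a standalone argument; what it buys over the paper's approach is self-containedness (and it makes visible that only the values of $u$ near its supremum matter), at the price of invoking the subsolution form of the Khasminskii criterion, whereas the paper's proof is a one-line citation whose only content is relaxing nonnegativity to boundedness from above.
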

\begin{proof}
 In \cite[Theorem~2.2]{Huang11}  the function $u$ in the statement is nonnegative. However, the proof given there works for bounded above $u \in \mathcal F (X)$.
\end{proof}

Next, we prove the stability result, Theorem~\ref{thm:refinements}, which states that if a refinement is stochastically complete, so is the original graph.

\begin{proof}[Proof of Theorem~\ref{thm:refinements}]
	Let $ b $ be a locally finite graph over $ (X,m) $ with intrinsic metric $d$ and let $n:X \times X \to \N_0$ with $n(x,y) \geq 1$ if and only if $x \sim y$. Let $ b' $ be the refinement over $ (X',m') $ with respect to  $ n $.
	
	Suppose that the given graph $b$ over $(X,m)$ is stochastically incomplete. After scaling, the weak Omori-Yau principle yields a bounded above   function  $ u \in \mathcal F (X) $ and $\alpha > 0$ such that $ \mathcal{L}u < -1 $ on the set $ \Omega_{\alpha}=\{x\in X\mid u(x)> \sup u - \al\} $. For $ x,y\in X $ with $ x\sim y $ let
	\begin{align*}
	\ph_{x,y}:[0,d(x,y)]\to \R, \quad 
	t\mapsto \frac{1}{2}t^{2}+\left(\frac{{u(y)-u(x)}}{d(x,y)}-\frac{d(x,y)}{2}\right)t+u(x).
	\end{align*}
	We write $\{x,y\} \cup X_{x,y}=\{x_{0},\ldots,x_{n(x,y)+1}\} $ with $ x=x_{0}\sim'\ldots\sim' x_{n(x,y)+1}=y $ and we define
	\begin{align*}
	u'(x_{i})=\ph_{x,y}\left(i \frac{d(x,y)}{n(x,y)+1}\right),\quad i=0,\ldots,n(x,y) + 1.
	\end{align*}
	Such an enumeration is unique up to reversing the order. However, since $ \ph_{x,y}(t)=\ph_{y,x}(d(x,y)-t) $, the function $u'$ is well-defined, i.e., it is independent of the enumeration of $X_{x,y}$. 
	%
	Moreover, it satisfies
	$$ u'\vert_{X}=u .$$ 
	We prove that $u'$ is bounded from above  and satisfies $\mathcal{L}'u' < -1/2$ on 
	$$\Omega_{\al}':=\{x'\in X'\mid u'(x') > \sup u' - \alpha\}.$$

	Since the second derivative of $\varphi_{x,y}$ equals $1$, the function $\varphi_{x,y}$ is convex and so $ \ph_{x,y}\leq u(x)\vee u(y)$. Therefore, $u'$ is bounded from above by $\sup u$ and
	\begin{align*}
	\Omega_{\al}'\subseteq \Omega_{\alpha}\cup\bigcup_{x\in \Omega_{\alpha}}\bigcup_{y\sim x}X_{x,y}.
	\end{align*}
	For $ x\in \Omega_{\alpha} $ and $y' \in X'\setminus X$ with $x \sim' y'$ there exists a unique $y \in X$ with $x \sim y$ and $y' \in X_{x,y}$. In this case, $b'(x,y') =b(x,y)(n(x,y) + 1)$, $m'(x) = m(x)$, $u(x) = u'(x)$ and $u'(y') = \ph_{x,y}(d(x,y)/(n(x,y)+1))$. Hence, with $n := n(x,y)$ we obtain
	\begin{align*}
	&b'(x,y')(u'(x) - u'(y')) \\
	&= b(x,y)(n + 1) \left(- \frac{d(x,y)^2}{2(n+1)^2} + \frac{(u(x) - u(y))}{n + 1} + \frac{d(x,y)^2}{2(n + 1)}\right) \\
	&\leq b(x,y)(u(x) - u(y)) + \frac{b(x,y)d(x,y)^2}{2}.
	\end{align*}
		For $ x\in \Omega_{\alpha} $ and $y' \in X$ with $ x\sim' y' $ the inequality above holds trivially.
	
	Summing up these inequalities over $y'$, and using that $d$ is intrinsic and $\mathcal{L} u < -1$ on $\Omega_\alpha$ yields
	\begin{align*}
	\mathcal{L}'u'(x)&=\frac{1}{m'(x)}\sum_{y'\in X'}b'(x,y')(u'(x)-u(y'))\\
	&\leq \frac{1}{m(x)}\sum_{y\in X}b(x,y)(u(x) - u(y)) + \frac{1}{2m(x)}\sum_{y\in X}b(x,y)d(x,y)^2 \\
	&< -1 + \frac{1}{2} = - \frac{1}{2}.
	\end{align*}
	Moreover, for  $ x\in \Omega_{\alpha} $ and $y \in X$ with $x \sim y$ and $ n:=n(x,y)\ge 1 $ let $ X_{x,y}\cup\{x,y\} =\{x_{0},\ldots,x_{n+1}\}$ such that $x= x_{0}\sim'\ldots\sim'x_{n+1}=y$. We set $d:= d(x,y)$. For $ i= 1,\ldots,n $  a straightforward computation shows
	\begin{align*}
       	\mathcal{L}'u'(x_{i})&=\frac{1}{m'(x_{i})}\sum_{e\in\{\pm 1\}} b'(x_{i},x_{i+e})\left(\ph_{x,y}\left(\frac{id}{n+1}\right)-\ph_{x,y}\left(\frac{(i + e)d}{n+1}\right)\right) \\
       	&=-\frac{1}{2}.
	\end{align*}
	Hence, by the weak Omori-Yau principle we infer stochastic incompleteness of the refinement.
\end{proof}

We denote the distance balls with respect to $ d $ and $ d' $ about a fixed reference point $ o \in X$ by $ B_{r} $ respectively $ B'_{r} $.
\begin{lemma}\label{lemma:properties of refinements}
	Let $ b $ be a locally finite graph over $ (X,m) $ and $ d $ be an intrinsic pseudo metric.  For the refinement $ b' $ over $ (X',m') $ with respect to a function $ n $ the following holds:

	\begin{itemize}
		\item [(a)]  $ d '$ is an intrinsic pseudo metric such that $ d=d' $ on $ X\times X $.
		\item [(b)] $ m(B_{r})\leq m'(B_{r}')\leq 2m (B_{r}) $  for all $ r\ge0 $.
		\item [(c)] The ball $ B_{r}' $ is finite if and only if $ B_{r} $ is finite  for all $ r\ge0 $.
	\end{itemize}
\end{lemma}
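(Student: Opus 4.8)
The plan is to verify the three items essentially by unwinding the definitions and tracking the comparison of measures and distances between $b$ and its refinement $b'$. These are structural bookkeeping statements, so I expect no deep obstacle; the care lies in handling the distances correctly.

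\medskip

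\emph{Part (a).} The fact that $d=d'$ on $X \times X$ is already noted after the definition of refinement, so I would only need to recall it. To see $d'$ is an intrinsic pseudo metric, I compute $|\nabla' d'(z,\cdot)|^2(z) = \sum_{w \sim' z} b'(z,w) d'(z,w)^2$ for each $z \in X'$ and compare it with $m'(z)$. For an added vertex $z \in X_{x,y}$ with neighbours along the inserted path, the edge weight is $b'=b(x,y)(n(x,y)+1)$ and the distance between consecutive path vertices is $d(x,y)/(n(x,y)+1)$, so each of the (at most two) terms contributes $b(x,y)(n(x,y)+1)\cdot d(x,y)^2/(n(x,y)+1)^2 = b(x,y)d(x,y)^2/(n(x,y)+1)$; summing over the two neighbours gives exactly $2b(x,y)d(x,y)^2/(n(x,y)+1) = m'(z)$, so the intrinsic inequality holds with equality there. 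For an original vertex $z = x \in X$ the gradient is a sum over all emanating paths, and since $d'$ restricted to $X$ equals $d$ and the first path-step distance is $d(x,y)/(n(x,y)+1) \le d(x,y)$, this term is dominated by the corresponding term of the original intrinsic bound $\sum_y b(x,y)d(x,y)^2 \le m(x)=m'(x)$; one must check that the sum over path-neighbours of $b'(x,x_1)d'(x,x_1)^2$ is at most $\sum_y b(x,y)d(x,y)^2$, which again follows from the same per-edge computation. The only subtlety is ensuring that $d'$ genuinely realizes these path-distances (i.e.\ no shorter route exists through $X$), which is built into the three-case definition of $d'$.

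\medskip

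\emph{Part (b).} Here I would estimate $m'(B'_r)$ by splitting the sum over $B'_r \subseteq X'$ into original vertices and added vertices. Since $d'=d$ on $X$, the original vertices of $B'_r$ are exactly $B_r$, contributing $m(B_r)$; this gives the lower bound $m(B_r) \le m'(B'_r)$. For the upper bound, each added vertex $z \in X_{x,y} \cap B'_r$ lies on a path between some $x,y$, and by the definition of $d'$ at least one endpoint, say $x$, satisfies $d(x,o) \le r$, hence $x \in B_r$. The total added mass on the edge $\{x,y\}$ is $n(x,y) \cdot 2b(x,y)d(x,y)^2/(n(x,y)+1) \le 2b(x,y)d(x,y)^2$. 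Summing over all edges at $x$ and using that $d$ is intrinsic, $\sum_{y\sim x} 2b(x,y)d(x,y)^2 \le 2m(x)$, so the added mass attributable to each $x \in B_r$ is at most $2m(x)$; being slightly more careful to avoid double-counting, one charges each added vertex to a single endpoint in $B_r$ and obtains $m'(B'_r) \le m(B_r) + 2m(B_r)$, or more sharply $\le 2m(B_r)$ after accounting for the original mass. The \textbf{main point to get right} is the charging argument so that the total comes out to the factor $2$ rather than $3$; I expect this requires observing that added vertices in $B'_r$ whose both endpoints lie in $B_r$ still sum (over a single edge) to at most $2b(x,y)d(x,y)^2$, and distributing this correctly against the intrinsic bound.

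\medskip

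\emph{Part (c).} This is the easiest item and follows from (b) together with local finiteness. If $B_r$ is finite then it contains finitely many original vertices, each of finite degree (local finiteness), so only finitely many edges $\{x,y\}$ with an endpoint in $B_r$ exist; each such edge contributes finitely many added vertices $n(x,y) < \infty$, whence $B'_r$ is finite. Conversely, since $B_r = B'_r \cap X$, finiteness of $B'_r$ immediately forces finiteness of $B_r$. Thus the \textbf{hardest step overall} is the measure-comparison bookkeeping in part (b); everything else reduces to direct substitution into the definitions.
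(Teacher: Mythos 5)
Parts (a) and (c) of your proposal are correct and follow the same route as the paper: in (a) the intrinsic inequality holds with equality at inserted vertices, while at original vertices one uses $b'(x,x_1)\,d'(x,x_1)^2 = b(x,y)d(x,y)^2/(n(x,y)+1) \le b(x,y)d(x,y)^2$; and (c) follows from the inclusion $B_r \subseteq B_r' \subseteq B_r \cup \bigcup_{x\in B_r,\,y\in X} X_{x,y}$ together with local finiteness. The lower bound in (b) is also fine.

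The problem sits exactly where you flagged it, in the upper bound of (b) — but it is not a gap you can close, because the constant $2$ is false. Your honest accounting gives $m'(B_r') \le m(B_r) + \sum_{x\in B_r} 2m(x) = 3m(B_r)$, and your proposed repair (splitting the per-edge added mass $2b(x,y)d(x,y)^2$ between the two endpoints) breaks down for edges with only one endpoint in $B_r$, which must then absorb the full amount. Indeed, consider $X=\{o,y\}$, $m\equiv 1$, $b(o,y)=1$, $d(o,y)=1$ (so $d$ is intrinsic, with equality), and refine with $n(o,y)=2$: each inserted vertex $z_i$ has $m'(z_i)=2/3$ and $d'(o,z_i)=i/3$, so for $r=2/3$ one finds $B_r=\{o\}$, $m(B_r)=1$, while $m'(B_r') = 1 + 2/3 + 2/3 = 7/3 > 2$. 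The paper's own proof of the constant $2$ contains a slip: it evaluates $m'\bigl(\bigcup_{y} X_{x,y}\bigr)$ as $\sum_{y\sim x} n(x,y)\, b(x,y)d(x,y)^2/(n(x,y)+1)$, dropping the factor $2$ in the definition of $m'$ on inserted vertices — the very factor that part (a) requires for the intrinsic property to hold with equality there. So the factor $3$ your argument produces is the correct constant; you should state and prove (b) as $m'(B_r') \le 3m(B_r)$ rather than chase the factor $2$. Nothing downstream is harmed: the proof of Theorem~\ref{thm:main} only uses $\ls(m'(B_r')) \le \ls(m(B_r)) + \log 2$ to absorb a constant, and $\log 3$ serves equally well.
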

\begin{proof} (a): That $d'$ is a pseudo metric with $ d=d' $ on $ X\times X $ readily follows from the definition of $ d' $ as already remarked above. We show that it is intrinsic. Since $d$ is intrinsic, for  $ x\in X $ we have
	\begin{align*}
	\sum_{y'\in X'}b'(x,y')d'(x,y')^{2}&=	\sum_{y\in X,y'\in X_{x,y}}b'(x,y')d'(x,y')^{2}\\
	&=\sum_{y\in X} b(x,y)(n(x,y)+1)\frac{d(x,y)^{2}}{(n(x,y)+1)^{2}}\\
	&\leq m(x)=m'(x).
	\end{align*}
  Furthermore,  for $ x\sim y $ every 	 $ x'\in X_{x,y} $ has only two neighbors $ y',y'' $. Moreover, $ b(x',y')=b(x',y'')=b(x,y)(n(x,y) + 1)$ and  $d'(x',y')=d'(x',y'')=d(x,y)/(n(x,y) + 1)  $. Therefore,
		\begin{align*}
			\sum_{z\in X'}b'(x',z) d'(x',z)^{2}=\frac{2b(x,y)d(x,y)^{2}}{n(x,y) + 1}=m'(x')
		\end{align*}
		by definition of $ m' $. 
	
	(b): By (a) we have $ B_{r}=B_{r}'\cap X $. Hence, $$ B_{r}\subseteq B_{r}' \subseteq B_{r}\cup \bigcup_{x\in B_{r},y\in X} X_{x,y}, $$
	where $ X_{x,y}=\emptyset $ if $ x\not\sim y $ with respect to $ b $.
	 Since $ m'\vert_{X}=m $, we have
	\begin{align*}
	m(B_{r})\leq m'(B_{r}').
	\end{align*}
	Furthermore, for given $ x \in X $ we have by the intrinsic metric property of $d  $
	\begin{align*}
	m'\left (\bigcup_{y\in X}X_{x,y}\right )=\sum_{y\sim x} n(x,y) \frac{b(x,y)d(x,y)^{2}}{n(x,y) + 1}\leq m(x).
	\end{align*}
	Hence, since $ m=m' $ on $ X $ we get by the estimate above
	\begin{align*}
	 m'(B_{r}')\leq m(B_{r})+ \sum_{x\in B_{r}}m'\left(\bigcup_{y \in X}X_{x,y}\right)\leq 2m(B_{r}).
	\end{align*}
	This proves statement (b).
	
	(c): This readily follows from the inclusion in the beginning of the proof of (b).
\end{proof}

Next, we show that for every function bounding the jump size of a graph outside of balls there exists a function $ n $ such that the corresponding refinement satisfies this bound. To this end we say that a function $ g:(0,\infty)\to(0,\infty) $ is {\em uniformly positive} on a set $ M\subseteq (0,\infty) $ if there is $ C_{M}>0 $ such that $g\ge C_{M}  $ on $ M $. Note that if $ f:(0,\infty)\to(0,\infty) $ is monotone increasing, then $ r\mapsto f(r) $ is uniformly positive on every compact set. In the following lemma we denote by $s'_r$ the jump size outside of balls in the refinement, i.e., 
$$s_{r}':=\sup\{d'(x',y')\mid  x',y'\in X'\text{ with } x'\sim' y'\mbox{ and }d'(x',o)\wedge d'(y',o)\ge r\}.$$

\begin{lemma}\label{l:refinement}
	Let $ b $ be a locally finite graph over $ (X,m) $ and let $d$ be a metric on $X$. For every  function $ g:(0,\infty)\to (0,\infty) $, which is uniformly positive on every compact set, there exists a symmetric $ n :X \times X \to \mathbb N_0 $ with $n(x,y) \geq 1$ if and only if $x \sim y$,  such that the refinement $ b' $ over $ (X',m') $ with metric $ d' $  with respect to $n$ satisfies 
	\begin{align*}
	s_{r}' \leq g(r),\qquad r\geq 1.
	\end{align*}
\end{lemma}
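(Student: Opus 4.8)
The plan is to construct the function $n$ explicitly, edge by edge, so that each original edge $\{x,y\}$ is subdivided into enough pieces to bring the refined jump size along that edge below the required threshold. The key observation is that after refinement, the maximal distance $d'$ across any single refined edge (i.e. between consecutive vertices $x_{i-1},x_i$ in the path replacing $\{x,y\}$) equals $d(x,y)/(n(x,y)+1)$. So the refined jump size outside $B_r'$ is governed by these fractions, ranging over edges that touch the region outside $B_r'$. To make $s_r' \leq g(r)$ for all $r \geq 1$, I would choose $n(x,y)$ large enough that $d(x,y)/(n(x,y)+1)$ is small compared to $g$ evaluated at the relevant radius.

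The main subtlety is identifying the correct radius at which to compare. An original edge $\{x,y\}$ with, say, $d(x,o) \leq d(y,o)$ contributes subdivision-edges whose endpoints lie at distances roughly between $d(x,o)$ and $d(y,o)$ from $o$. For such a subdivision-edge to be relevant to $s_r'$, both its endpoints must lie outside $B_r'$, which forces $r$ to be at most (essentially) the distance of the nearer endpoint of that subdivision-edge to $o$. The cleanest approach is: for each original edge $\{x,y\}$, set $\rho(x,y) = d(x,o) \wedge d(y,o)$, the distance of its nearer endpoint. Since $g$ is uniformly positive on every compact set, on the interval $[1, \rho(x,y)+s]$ (or an appropriate compact set capturing all radii $r\geq 1$ for which this edge can contribute) $g$ is bounded below by some positive constant $C_{x,y} > 0$. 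Then I would choose
\begin{align*}
n(x,y) \geq \frac{d(x,y)}{C_{x,y}}
\end{align*}
(rounded up, and set to $0$ exactly when $x \not\sim y$), guaranteeing $d(x,y)/(n(x,y)+1) \leq C_{x,y} \leq g(r)$ for every relevant $r \geq 1$. Local finiteness of $b$ ensures each $x$ has only finitely many edges, so $n$ is a well-defined symmetric $\N_0$-valued function.

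To verify $s_r' \leq g(r)$ for a fixed $r \geq 1$, I would take any refined edge $x' \sim' y'$ with $d'(x',o) \wedge d'(y',o) \geq r$. This refined edge sits inside the path replacing some original edge $\{x,y\}$, so $d'(x',y') = d(x,y)/(n(x,y)+1)$. Because $d' = d$ on $X \times X$ and $d'$ respects the path structure, the nearer endpoint $x'$ satisfies $r \leq d'(x',o)$, and one checks that $r$ falls within the compact range on which $g \geq C_{x,y}$ was used to define $n(x,y)$; hence $d'(x',y') \leq C_{x,y} \leq g(r)$. The only step requiring genuine care is bookkeeping the geometry of $d'$ near a subdivided edge to confirm that $r \geq 1$ together with both endpoints lying outside $B_r'$ really does constrain $r$ to the compact set on which the lower bound $C_{x,y}$ was extracted; this is where one uses that interior vertices of a path interpolate the distances of the endpoints $x,y$ up to the jump size $s$, so that uniform positivity on the relevant compact set suffices.
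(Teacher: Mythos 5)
Your construction is essentially the paper's own proof: subdivide each edge $\{x,y\}$ so that the refined step length $d(x,y)/(n(x,y)+1)$ falls below the infimum of $g$ over the compact window of radii at which that edge can still contribute to $s_r'$, the window being controlled by the triangle inequality for $d'$ along the subdividing path. The paper takes the window $[1,r_{x,y}]$ with $r_{x,y}=\max\{d(x,o),d(y,o)\}+d(x,y)$, requires $n(x,y)+1 \geq d(x,y)/\inf_{[1,r_{x,y}]}g$, and verifies the bound via exactly the chain you describe: $r \leq d'(x',o) \leq d(x,o)+d'(x',x)\leq d(x,o)+d(x,y)\leq r_{x,y}$, whence $d'(x',y')=d(x,y)/(n(x,y)+1)\leq g(r)$.

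One point in your write-up needs fixing. You take the window to be $[1,\rho(x,y)+s]$ with $s$ the \emph{global} jump size, and you invoke $s$ again in the final bookkeeping remark. But the lemma assumes only that $b$ is locally finite and $d$ is a metric; it does not assume $s<\infty$ (in the paper this lemma is stated before any finite-jump-size reduction, and a locally finite graph can well have $s=\infty$). If $s=\infty$ your interval is not compact, and uniform positivity of $g$ on compact sets yields no constant $C_{x,y}$. The repair is exactly what your parenthetical hedge suggests and costs nothing: every vertex $x'$ on the path replacing $\{x,y\}$ satisfies $d'(x',x)\leq d(x,y)$ and $d'(x',y)\leq d(x,y)$, hence $d'(x',o)\leq \big(d(x,o)\wedge d(y,o)\big)+d(x,y)$, so the correct per-edge window is $\big[1,\, d(x,o)\wedge d(y,o)+d(x,y)\big]$ — compact regardless of the jump size, since it involves only the length of the edge at hand. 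With $s$ replaced by $d(x,y)$ throughout, your argument is complete and coincides with the paper's (up to the immaterial choice of $\min$ versus $\max$ of the endpoint distances).
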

\begin{proof}
Let $x,y \in X$ with $x \sim y$ and let $r_{x,y} := \max\{d(x,o),d(y,o)\} + d(x,y)$. For each such pair of vertices, we choose $n(x,y) \in \N$ so large  that 
$$\frac{d(x,y)}{\inf_{r\in[1,r_{x,y}]}g(r_{x,y})} \leq n(x,y) + 1,$$
which is possible since $ \inf_{r\in[1,r_{x,y}]}g(r)>0 $. If $x \not  \sim y,$ we let $ n(x,y) = 0$.

 We  prove that the refinement with respect to $n$ has the desired properties.  Let $ r\ge1 $, $x',y' \in X'$ with $x' \sim' y'$ and $d'(x',o)\wedge d'(y',o)\ge r$. By the definition of the refinement there exist unique $x,y \in X$ with $x \sim y$ such that $x',y' \in \{x,y\} \cup X_{x,y}$. 
 Then, 
 $$r \leq d'(x',o) \leq d(x,o) + d'(x',x) \leq d(x,o) + d(x,y) \leq r_{x,y}.$$ 
 Therefore, by the definition of $d'$ and $n$ we have 
 $$d'(x',y') = \frac{d(x,y)}{n(x,y) + 1} \leq \inf_{r'\in[1,r_{x,y}]}g(r')\leq g(r).$$
Hence, we obtain $ s'_{r}\leq g(r) $ for $ r\ge 1 $.
\end{proof}

Refinements are only defined for locally finite graphs. Hence, the stability of stochastic completeness under refinements can only be employed for locally finite graphs. The way of incorporating locally infinite graphs into our main theorem is through  the stability of stochastic completeness under adding large jumps and using that graphs with finite distance balls and finite jump size are locally finite. This is discussed next.

Given a graph $b$ over $(X,m)$ with intrinsic pseudo metric $d$ and $0 < s < \infty$ we define the {\em truncated edge weight} $b_s := b 1_{\{d \leq s\}}$. The graph $b_s$ over $(X,m)$ with pseudo metric $d$ has jump size $s$. Moreover, $d$ is  intrinsic with respect $b_s$ over $(X,m)$. 

\begin{lemma} \label{lemma:truncation}
 Let $b$ be a graph over $(X,m)$ with intrinsic pseudo metric $d$ and let $s> 0$. If $b_s$ over $(X,m)$ is stochastically complete, then $b$ over $(X,m)$ is stochastically complete.
\end{lemma}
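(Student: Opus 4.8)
The plan is to show that stochastic completeness of the truncated graph $b_s$ transfers to the full graph $b$ by exploiting the \emph{weak Omori-Yau principle} (Lemma~\ref{lemma:omori yau}) together with the fact that adding edges of length at most $s$ to the truncated Laplacian only pushes $\mathcal{L}u$ \emph{further negative} on a sublevel set of a bounded-above function. The key structural observation is that the difference $b - b_s = b 1_{\{d > s\}}$ consists precisely of the edges $(x,y)$ with $d(x,y) > s$, and for these edges the intrinsic metric inequality gives $\sum_{y : d(x,y)>s} b(x,y) \le \sum_y b(x,y) d(x,y)^2 / s^2 \le m(x)/s^2$. So the ``large jump'' part of the Laplacian is a bounded perturbation in an appropriate sense.

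First I would argue by contrapositive: assume $b$ is stochastically incomplete and derive that $b_s$ is also stochastically incomplete. Applying Lemma~\ref{lemma:omori yau} to $b$, there is a function $u \in \mathcal{F}(X)$ with $\sup u < \infty$, a constant $c > 0$, and $\alpha > 0$ such that $\mathcal{L}u < -c$ on $\Omega_\alpha = \{x \mid u(x) > \sup u - \alpha\}$, where $\mathcal{L}$ is the Laplacian of $b$. I would like to use this same $u$ to verify the Omori-Yau criterion for $b_s$. Writing $\mathcal{L}_s$ for the Laplacian of $b_s$, the two Laplacians differ by
\begin{align*}
(\mathcal{L}u - \mathcal{L}_s u)(x) = \frac{1}{m(x)} \sum_{y : d(x,y) > s} b(x,y) (u(x) - u(y)).
\end{align*}
The difficulty is that this difference need not have a favorable sign: across a long edge, $u(y)$ could exceed $u(x)$, making $\mathcal{L}_s u(x)$ \emph{larger} than $\mathcal{L}u(x)$ and potentially destroying the bound $\mathcal{L}_s u < -c'$.

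To control this, I would use that $u$ is bounded above, say $u \le S := \sup u$. For $x \in \Omega_\alpha$ we have $u(x) > S - \alpha$, so for any neighbor $y$,
\begin{align*}
u(x) - u(y) \ge (S - \alpha) - S = -\alpha,
\end{align*}
i.e.\ $u(x) - u(y) \ge -\alpha$. Hence the perturbation is bounded below:
\begin{align*}
(\mathcal{L}u - \mathcal{L}_s u)(x) \ge \frac{-\alpha}{m(x)} \sum_{y : d(x,y) > s} b(x,y) \ge \frac{-\alpha}{m(x)} \cdot \frac{m(x)}{s^2} = -\frac{\alpha}{s^2},
\end{align*}
using the intrinsic-metric estimate $\sum_{y: d(x,y)>s} b(x,y) \le \sum_y b(x,y) d(x,y)^2/s^2 \le m(x)/s^2$. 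Therefore on $\Omega_\alpha$ we get $\mathcal{L}_s u \le \mathcal{L}u + \alpha/s^2 < -c + \alpha/s^2$. The obstacle is that $\alpha/s^2$ might exceed $c$, so this raw estimate is not yet enough; the fix is to shrink $\alpha$. Since the Omori-Yau principle only requires \emph{some} $\alpha > 0$, I may replace $\alpha$ by any smaller $\alpha' \in (0,\alpha]$ — the set $\Omega_{\alpha'} \subseteq \Omega_\alpha$ still satisfies $\mathcal{L}u < -c$, and now the perturbation bound becomes $-\alpha'/s^2$.

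Thus I would choose $\alpha'$ small enough that $c - \alpha'/s^2 > 0$, for instance $\alpha' = \min\{\alpha, cs^2/2\}$, giving $\mathcal{L}_s u < -c + \alpha'/s^2 \le -c/2 =: -c'$ on $\Omega_{\alpha'}$ with $c' = c/2 > 0$. Since $u \in \mathcal{F}(X)$ for $b$ and $b_s \le b$, one checks $u \in \mathcal{F}(X)$ for $b_s$ as well, and $\sup u < \infty$ is unchanged. Applying the weak Omori-Yau principle in the other direction (Lemma~\ref{lemma:omori yau}) to $b_s$ with the data $(u, c', \alpha')$ yields that $b_s$ is stochastically incomplete, contradicting the hypothesis. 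I expect the only genuinely delicate point to be the sign control of the perturbation, which is resolved precisely by the boundedness of $u$ from above combined with the freedom to shrink the level-set parameter $\alpha$; the intrinsic-metric summability estimate is then a routine application. This completes the proof plan.
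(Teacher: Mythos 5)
You are correct, and your argument follows essentially the same route as the paper: argue by contrapositive, extract $(u,c,\alpha)$ from the weak Omori--Yau principle (Lemma~\ref{lemma:omori yau}), control the large-jump part of the Laplacian via the intrinsic-metric bound $\sum_{y:\,d(x,y)>s} b(x,y)\le m(x)/s^2$, and then apply the principle to $b_s$. The only minor divergence is how the perturbation is tamed: you shrink the level parameter to $\alpha'=\min\{\alpha, cs^2/2\}$ and use the oscillation bound $u(x)-u(y)>-\alpha'$ on $\Omega_{\alpha'}$, whereas the paper adds a constant to $u$ so that $0<\sup u<s^2$ (and $u>0$ on $\Omega_\alpha$) and bounds the perturbation by $\sup u/s^2<1$; both are valid, and your two-sided bound even sidesteps the sign bookkeeping for $u(y)$ that the paper's estimate implicitly requires.
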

\begin{proof}
 We  show that if $b$ is stochastically incomplete, then $b_s$ is stochastically incomplete. We denote by $\mathcal L_s$ the formal Laplacian of the graph $b_s$ over $(X,m)$.

 Assume that $b$ is stochastically incomplete. After scaling the weak Omori-Yau principle, Lemma~\ref{lemma:omori yau}, yields a bounded above function $u \in \mathcal F(X)$ and $\alpha > 0$ with $\mathcal L u < -1$ on $\Omega_\alpha = \{x \in X \mid u(x) >  \sup u - \alpha\}$. By adding a constant function we can also assume $0 < \sup u < s^2$.  Moreover, since $\Omega_\alpha \subseteq \Omega_\beta$ if $\alpha < \beta$, we can further assume $\alpha < \sup u$ so that $u > 0$ on $\Omega_\alpha$. Using that $d$ is intrinsic, for $x \in \Omega_\alpha$ we obtain
 \begin{align*}
  \mathcal L_s u(x) &= \mathcal L u(x) - \frac{1}{m(x)}\sum_{y \in X, d(x,y) > s} b(x,y) (u(x) - u(y))\\
  &< -1 + \frac{1}{m(x)} \sum_{y \in X, d(x,y) > s} b(x,y) u(y) \frac{d(x,y)^2}{s^2}\\
  &< -1 + \frac{\sup  u}{s^2}.
 \end{align*}
With this at hand stochastic completeness follows from the weak Omori-Yau principle. 
\end{proof}

\begin{remark}
 In principle the previous lemma is contained in \cite{GHM12}. Since the proof given there is a bit lengthy and since it is not straightforward to see that our Dirichlet form $Q$ and the Dirichlet form treated in \cite{GHM12} coincide, we chose to present a short proof based on the weak Omori-Yau principle. 
\end{remark}

%
%

We have now gathered all necessary reductions that allow us to prove our main theorem. The strategy is as follows. First we truncate the edge weight  and then we refine the truncated graph. For this refined graph we use the uniqueness class theorem and the volume growth assumption to obtain that bounded solutions to the heat equation with initial value $0$ are trivial. As mentioned after the definition of stochastically complete graphs, this is equivalent to stochastic completeness, see also \cite{KL12}.

\begin{proof}
	[Proof of Theorem~\ref{thm:main}]
		Recall that $\ls =\max\{\log, 1\}$.  Let $ b $ be a graph over $ (X,m) $  and let $ d $ be an intrinsic pseudo metric  with finite distance balls and
	\begin{align*}
	\int_{1}^{\infty}\frac{ r}{\ls({m(B_{r})})}dr=\infty.
	\end{align*}
	According to Lemma~\ref{lemma:truncation}, without loss of generality we can assume that $b$ has finite jump size with respect to $d$.  Finite jump size and finite distance balls imply that $b$ is locally finite, see \cite[Lemma~3.5]{Ke15}.

	Let $ n:X\times X\to\N_{0} $ be chosen such that the corresponding refinement $ b' $ over $(X',m')$ with pseudo metric $d'$ is globally local with respect to the logarithmic volume growth $f:(0,\infty) \to (0,\infty)$, $f(r) = \ls (m(B_r))$ which exists by Lemma~\ref{l:refinement}. According to Theorem~\ref{thm:refinements} it suffices to show that $b'$ over $(X',m')$ is stochastically complete. By \cite{KL10} this is equivalent to all bounded solutions to the heat equation with respect to  $b'$ with initial value $0$ being trivial. In order to verify this condition we employ Theorem~\ref{theorem:uniqueness class}.
	
	As above, we denote by $B_r'$ the distance ball of radius $r$ around $o$ with respect to $d'$. Lemma~\ref{lemma:properties of refinements} shows that $d'$ is intrinsic, that $d'$ has finite distance balls and that  $\ls (m'(B_r')) \leq f(r) + \log 2$.  Let now $u:(0,\infty) \times X' \to \mathbb R$ be a bounded solution to the heat equation.  For $T > 0$ it satisfies 
	$$ \int_0^T \sum_{x \in B_r'} |u_t(x)|^2 m(x) dt \leq T \sup u ^2m'(B'_r) \leq e^{f(r) + K},$$
	where $K > 0$ is a constant. Since $b'$ is globally local with respect to $f$ (and hence also with respect to $f + K$) and by assumption $f$ satisfies
	$$\int^\infty_1 \frac{r}{f(r) + K} dr = \infty,$$
	Theorem~\ref{theorem:uniqueness class} yields $u = 0$ on $(0,T) \times X'$. Since $T$ was arbitrary, we arrive at $u = 0$ and the claim is proven.
%
%
\end{proof}

\section{Examples and sharpness of the estimates} \label{section:examples}
	
	In this section we discuss an example which shows that our unique class criterion for globally local graphs is sharp in some sense. Already in the PhD thesis \cite{Hua11} there is a example on the integer line which shows that  without further assumptions on the graph one does not get the same uniqueness class statement as on manifolds. 
	
	Here we slightly modify and simplify this example to obtain a graph whose jump size $ s_{r} $ outside of balls decreases such that
	$$\frac{C^{-1}}{r} \leq   s_{r}\leq \frac{C}{r},\qquad r\ge 1, $$ 
	for some constant $ C\ge 1 $. 	According to Theorem~\ref{theorem:uniqueness class} such a graph does not allow non-trivial solutions to the heat equation whenever the function $ f $ in the growth condition \eqref{equation:growth condition 0} satisfies 
	$$ f(r)\leq C r^{2},\qquad r>1 ,  $$
	for some constant $ C > 0 $. However, such a graph is not globally local with respect to the function 
	$$   f(r) = C r^{2}\log r,\qquad r>1 ,$$ 
	where $ C\ge 1 $ is some constant  and we construct a non-trivial solution of the heat equation  that satisfies the growth  condition  \eqref{equation:growth condition 0} with respect to this  function. 
	
	This example underscores the following points. First of all, as was already observed in  \cite{Hua11}, the uniqueness class criterion from the case of manifolds does not hold  on graphs without further assumptions. Secondly, for globally local graphs our result improves the uniqueness class criterion of \cite{Huang12}. Thirdly, the globally local condition is sharp in the sense that if it is missed by a factor of a logarithm, then there are non-trivial solutions of the heat equation that satisfy the growth condition of Theorem~\ref{theorem:uniqueness class}.  	We discuss these points in detail while developing the example.

	Let $ X=\mathbb{Z} $ and $ m\equiv 1 $. Define a graph  over $ (X,m) $ by a symmetric $ b $  given by
	\begin{align*}
	b(0,-1)=1\qquad \mbox{and}\qquad		 b(n-1,n) = b(-n,-n-1)=n, \,n\ge 1 ,
	\end{align*}
	and $ b(k,l) =0$ for $ |k-l|\neq 1 $. Notice that $ d $ defined as $$  d (k,n)=\sum_{l=k}^{n}(1\vee (2l+1))^{-\frac{1}{2}},\qquad k,n\in \mathbb{Z},  $$
	is an intrinsic metric and there is $ C\ge0 $ such that 
	\begin{align*}
	C^{-1}d (0,n)^{2}	\leq n\leq C d(0,n)^2
	\end{align*}
	for all $ n\in \mathbb{Z} $. Therefore, we can compute $ s_{r} $
	\begin{align*}
	s_r 		 &=  \sup_{d(0,n)   \geq r}d(n,n+1) \geq C^{-1}\sup_{n \geq r^2} n^\frac{1}{2} \geq \frac{C^{-1}}{r}
	\end{align*}
	for some constant $ C\ge 1 $ and simultaneously $ s_{r}\leq C/r $ for $ r\ge 1 $.
	Our uniqueness class criterion now states that there are no non-trivial solutions $ u $ to the heat equation such that
	\begin{align*}
	\int_0^T \sum_{x \in B_r(0)} |u_t(x)|^2 m(x) dt \le   Ce^{c r^2},\qquad r> 0
	\end{align*}
	for certain  constants $ C,c>0 $. This improves \cite[Theorem~0.8]{Huang12},  which needs a bound of $C e^{c r\log r} $ with $0< c <1/2$ on the right hand side to guarantee triviality of $ u $.

	Next, we construct a non-trivial solution to the heat equation which satisfies the growth condition of our main theorem but  with respect to a function for which   the graph fails to be globally local by a factor of a logarithm.
	
	Let $ g:(0,\infty)\to (0,\infty) $
	\begin{align*}
	g(t)= e^{-t^{-2}}.
	\end{align*}
	We define $ u:\mathbb{Z}\times (0,\infty)\to (0,\infty)$ by
	\begin{align*}
	u_t(n)=\sum_{k=0}^{n}\frac{1}{k!}\binom{n}{k}g^{(k)}(t), \quad n\ge 0, t>0,
	\end{align*}
	and for $ n\leq -1, t>0 $ we let
	$$  u_t(n)=u_t(-n-1),$$
	i.e., $u$ is symmetric about the point $ 1/2 $ on the space axis. It is straight forward to verify
	\begin{align*}
	 \partial_{t}=-\mathcal{L}u.
	\end{align*}
	Furthermore, in order to estimate the growth of $ u $ we notice that  there is $ c>0$ such that for all $ t>0 $ and $ k\in\mathbb{N}_{0} $ the $ k $-th derivatives of $ g $ satisfy
	\begin{align*}
	|g^{(k)}(t)|\leq k!\left (\frac{2k}{c}\right)^{\frac{k}{2}},
	\end{align*}
	see \cite[Lemma~3.1]{Huang12} and the discussion preceeding it. This estimate and Stirling's formula imply that there is $ C\ge1  $ such that
	\begin{align*}
	|u_t(n)|\leq \sum_{k=0}^{n}\binom{n}{k} \left (\frac{2k}{c}\right)^{\frac{k}{2}}\le n! \left(\frac{2n}{c}\right)^{\frac{ n}{2}}\leq C e ^{C n\log n}.
	\end{align*}
	For $ r $ let $ n(r) $ be the largest $ n $ such that $ n\in B_{r}(0) $. We conclude that for $ 0\leq T<\infty  $ there are  constants $ C\ge 1 $ (which may change in every step of the inequality) such that
	\begin{align*}
	\int_0^T \sum_{x \in B_r(0)} |u_t(x)|^2 m(x) dt &\le Cn(r)|u_t(n(r))|^2\\
	&\leq C e^{C n(r)\log n(r)}\\
	&\leq C e^{C r^2\log r} =e^{f(r)},
	\end{align*}
	with 	$ f(r)= C r^2\log r +\log C$. This $ f $ satisfies
	$$  \int_{1}^\infty \frac{r}{f(r)}dr=\infty .$$
	So, while the function $ u $ satisfies the growth assumption of the uniqueness class theorem, Theorem~\ref{theorem:uniqueness class}, the graph fails to be globally local, as  $ s_{r}\ge C^{-1}/r $ and, therefore,
	$$   \frac{s_{r}f(A r)}{r}\ge \log r \to \infty,\qquad r\to\infty,$$
	for all constants $ A>1$.
	
	This shows that the globally local assumption is indeed needed for establishing a uniqueness class. Moreover, it shows that is sharp in some sense. Namely, we presented a counterexample where the globally local assumption is only missed by a factor of a logarithm.
	\bigskip

\textbf{Acknowledgements.} The authors use this opportunity to express their deep gratitude towards their former Ph.D. advisors Sascha Grigor'yan (X.H.) and Daniel Lenz (M.K., M.S.). We are grateful for every of the countless hours you discussed the topic of this paper with us. Furthermore, the authors M.K. and M.S. acknowledge the financial support of the DFG while X.H. is supported by National Natural Science Foundation of China(Grant No. 11601238), and The Startup Foundation for Introducing Talent of Nanjing University of Information Science and Technology (Grant No. 2015r053).
Moreover, the authors want to thank Bobo Hua and Fudan University for their hospitality during the conference ``Discrete Analysis'' when the major steps towards the proof of the results presented here were taken. 
\bibliographystyle{plain}
 
\bibliography{literature}

\end{document}